\newcommand{\R}{\mathbb{R}} 
\newcommand{\bs}{\boldsymbol}
\newcommand{\interp}{R}
\newcommand{\mK}{\mathsf{K}}
\newcommand{\Chi}{\mbox{\Large$\chi$}}
\newcommand\restr[2]{{
		\left.\kern-\nulldelimiterspace 
		#1 
		\right|_{#2} 
}}
\begin{document}
\title{Mapped Variably Scaled Kernels: Applications to Solar Imaging}
%
%
\author{Francesco Marchetti\inst{1}\orcidID{0000-0003-1087-7589} \and
Emma Perracchione \inst{2}\orcidID{0000-0003-2663-7803} \and
Anna Volpara \inst{3}\orcidID{0000-0003-1479-0827} \and
Anna Maria Massone \inst{3}\orcidID{0000-0003-4966-8864} \and
Stefano De Marchi\inst{1}\orcidID{0000-0002-2832-8476}\and
Michele Piana \inst{3}\orcidID{0000-0003-1700-991X}}
\authorrunning{F. Marchetti et al.}
%
\institute{University of Padova, Padua, Italy
\email{\{francesco.marchetti,stefano.demarchi\}@unipd.it}
 \and
Polytechnic of Torino, Turin, Italy
\email{emma.perracchione@polito.it}
\and
University of Genova, Genoa, Italy
\email{\{volpara,massone,piana\}@dima.unige.it}
}
\maketitle              
\begin{abstract}
Variably scaled kernels and mapped bases constructed via the so-called fake nodes approach are two different strategies to provide adaptive bases for function interpolation. In this paper, we focus on kernel-based interpolation and we present what we call mapped variably scaled kernels, which take advantage of both strategies. We present some theoretical analysis and then we show their efficacy via numerical experiments. Moreover, we test such a new basis for image reconstruction tasks in the framework of hard X-ray astronomical imaging.

\keywords{Variably scaled kernels  \and Mapped bases interpolation \and Hard X-ray imaging.}
\end{abstract}

\section{Introduction}

Kernel-based interpolation is an effective approach to deal with the scattered data interpolation problem, where data sites do not necessarily belong to some particular structure or grid \cite{Fasshauer,Wendland05}. Therefore, because of its flexibility and the achievable accuracy, it finds application in many different contexts \cite{Guastavino20}, including image reconstruction and the numerical solution of partial differential equations. The effectiveness of radial kernel-based interpolation, which is also known as Radial Basis Function (RBF) interpolation, very often relies on a good choice of the so-called \textit{shape parameter}, which rules the shape of basis functions. However, in many situations, a fine tuning of this shape parameter is not sufficient to construct basis functions that are tailored with respect to both the data distribution and the target function to be recovered. 

Therefore, in order to gain more adaptivity in the interpolation process, Variably Scaled Kernels (VSKs) have been introduced in \cite{Bozzini15}, and further analyzed in \cite{campivsk} in the more general framework of kernel-based regression networks, and in \cite{DeMarchi22} in the context of persistent homology. The VSK setting has also been employed in the reconstruction of functions presenting jumps, leading to the definition of Variably Scaled Discontinuous Kernels (VSDKs) \cite{vskjump}, which turned out to be effective in medical image reconstruction tasks in the field of Magnetic Particle Imaging (MPI) \cite{vskmpi}. 

An alternative approach for constructing data-dependent or target-dependent basis functions, the so-called Fake Nodes Approach (FNA), has been introduced in \cite{DeMarchi20} in the framework of univariate polynomial interpolation, and then extended to rational barycentric approximation \cite{Berrut20} and general multivariate interpolation, including the RBF framework \cite{DeMarchi21}. In particular, in latter paper the authors showed that the VSDK setting and the FNA can lead to very similar results when facing the Gibbs phenomenon in the reconstruction of discontinuous functions. Moreover, in \cite{DeMarchi21b} an effective scheme for dealing with both Gibbs and Runge's phenomena has been proposed.

In this paper, our purpose is to design a unified kernel-based approach for function interpolation by taking advantage of both the VSK setting and the FNA, which are recalled in Section \ref{sec:vsk} and \ref{sec:fake}, respectively. The resulting kernels that we call Mapped VSKs (MVSKs) are defined in Section \ref{sec:mvsdk} and provided with an original theoretical contribution, which includes a focus on the discontinuous case tested in Section \ref{sec:experiments}. In Section \ref{sec:stix} we present the application of the proposed method to the inverse problems of solar hard X-ray imaging and, specifically, we consider data from the on the ESA {\em{Spectrometer/Telescope for Imaging X-rays (STIX)}} telescope, on board of Solar Orbiter mission. Finally, in Section \ref{sec:conclusions} we draw some conclusions.

\section{Variably Scaled Kernel-Based Approximation}\label{sec:vsk}

We refer to \cite{Fasshauer,Wendland05} for the following introduction. 

Let $ \Omega \subseteq \mathbb{R}^{d}$, $d\in\mathbb{N}$, and $ X=X_N = \{  \boldsymbol{x}_i, \; i = 1,  \ldots , N\} \subset \Omega$ be a set of possible scattered distinct nodes, $N\in\mathbb{N}$. Suppose that we wish to reconstruct an unknown function  $f: \Omega \longrightarrow \mathbb{R}$ from its values at $X_N$, i.e. from the vector $\bs{f}=(f(\bs{x}_1),\dots,f(\bs{x}_n))^{\intercal}=(f_1,\dots,f_n)^{\intercal}$. In kernel-based interpolation, this is obtained by considering an approximating function of the form
\begin{equation*}
    \interp_{f,X}(\bs{x})=\sum_{i=1}^n c_i \kappa_{\varepsilon}(\bs{x},\bs{x}_i),\quad \bs{x}\in\Omega,
\end{equation*}
where $\bs{c}=(c_1,\dots,c_n)^{\intercal}\in\mathbb{R}^N$ and $\kappa_{\varepsilon}:\Omega\times\Omega\longrightarrow \mathbb{R}$ is a strictly positive definite kernel, which depends on a \textit{shape parameter} $\varepsilon>0$. In the following, we may use the shortened notation $\kappa=\kappa_{\varepsilon}$. The interpolation conditions are imposed by employing $\bs{c}$ so that
\begin{equation}\label{eq:system}
    \mK\bs{c}=\bs{f},
\end{equation}
where $\mK=(\mK_{i,j})=\kappa(\bs{x}_i,\bs{x}_j)$, $i,j=1,\dots,N$, is the so-called interpolation (or collocation or simply kernel) matrix. The vector $\bs{c}$ that satisfies \eqref{eq:system} is unique as long as $\kappa$ is strictly positive definite. Moreover, we assume $\kappa_{\varepsilon}$ to be \textit{radial}, i.e. there exists a univariate function $\varphi_{\varepsilon}:\mathbb{R}_{\ge0}\longrightarrow \mathbb{R}$ such that $\kappa_{\varepsilon}(\bs{x},\bs{y})=\varphi_{\varepsilon}(r)$, with $r\coloneqq\lVert \bs{x}-\bs{y}\lVert_2$.

The interpolant $\interp_{f,\mathcal{X}}$ belongs to the dot-product space $$\mathcal{H}_{\kappa} =  \mathrm{span} \left\{ \kappa(\cdot,\boldsymbol{x}), \; \boldsymbol{x} \in \Omega \right\},$$ whose completion with respect to the norm $\lVert \cdot \lVert_{\mathcal{H}_{\kappa}}=\sqrt{(\cdot,\cdot)_{\mathcal{H}_{\kappa}}}$ induced by the bilinear form $(\cdot,\cdot)_{\mathcal{H}_{\kappa}}$ is the \textit{native space} $\mathcal{N}_{\kappa}$ associated to $\kappa$.  The well-known pointwise error bound \cite[Theorem 14.2, p. 117]{Fasshauer}
\begin{equation}\label{eq:the_error}
    |f(\bs{x})-\interp_{f,X}(\bs{x})|\le P_{\kappa,X}(\bs{x})\lVert f \lVert_{\mathcal{N}_{\kappa}},\quad f\in \mathcal{N}_{\kappa}, \quad \bs{x}\in\Omega,
\end{equation}
involves the \textit{power function} $P_{\kappa,X}=\lVert \kappa(\cdot,x)- \bs{\kappa}(\cdot)^{\intercal}\mK^{-1} \bs{\kappa}(\bs{x})\lVert_{\mathcal{H}_{\kappa}}$, where $\bs{\kappa}(\bs{x})\coloneqq(\kappa(\bs{x},\bs{x}_1),\dots,\kappa(\bs{x},\bs{x}_N))^{\intercal}$. In the estimate \eqref{eq:the_error}, a noteworthy property is that the error is split into a first term that only depends on the nodes and on the kernel, and a second term that relies on the underlying function $f$. 

A different perspective is provided by the following error bound, which takes into account the so-called \textit{fill distance} 
\begin{equation*}
    h_{X,\Omega} \coloneqq  \sup_{ \boldsymbol{x} \in \Omega} {\min_{ \boldsymbol{x}_k  \in {X}} \lVert \boldsymbol{x} - \boldsymbol{x}_k \lVert_2 }.
\end{equation*}
Assuming $\Omega$ to be bounded and satisfying an interior cone condition, and $\kappa \in C^{2k}\left(\Omega \times \Omega\right)$, there exists $C_\kappa(\bs{x})$ such that
	\begin{equation}\label{eq:conv_rate}
		|f\left(\boldsymbol{x}\right)-\interp_{f,X}\left(\boldsymbol{x}\right)| \leq C_\kappa(\bs{x}) h^k_{ X,\Omega}  \lVert f \lVert_{{\cal N}_{\kappa}(\Omega)}.
	\end{equation}
The factor $C_\kappa(\bs{x})$ depends on the maximum of kernel derivatives of degree $2k$ in a neighborhood of $\bs{x}\in\Omega$, and $h^k_{ X,\Omega}$ needs to be \textit{small enough}; we refer to \cite[Section 14.5]{Fasshauer} for a detailed presentation of this result.

While the theoretically achievable convergence rate is influenced by the fill distance and the smoothness of the kernel, two terms play an important role in affecting the conditioning of the interpolation process:
 \begin{itemize}
     \item 
     The \textit{separation distance} $q_{X} \coloneqq \dfrac{1}{2} \min_{  i \neq j} \lVert \boldsymbol{x}_i - \boldsymbol{x}_j \lVert_2.$
     \item
     The value of the shape parameter $\varepsilon$ of the kernel $\kappa_\varepsilon$.
 \end{itemize}
Precisely, the interpolation process gets more ill-conditioned as the separation distance becomes smaller, which is what usually happens in practice when increasing the number of interpolation nodes, thus reducing the value of the fill-distance (this is often denoted as a \textit{trade-off} principle in RBF literature). As far as the shape parameter is concerned, a large value produces very localized basis functions that lead to a well-conditioned but likely inaccurate approximation scheme, while by lowering such value we may obtain a more accurate reconstruction at the price of an ill-conditioned setting. As a consequence, the tuning of the shape parameter is a non trivial problem in kernel-based interpolation.

In order to partially overcome such instability issues, in \cite{Bozzini15} the authors introduced Variably Scaled Kernels (VSKs), which are defined as follows. Letting  
$\psi: \Omega \longrightarrow \R$ be a scaling or shape function and $\kappa_\varepsilon: (\Omega\times\R)  \times (\Omega\times\R) \longrightarrow \mathbb{R}$ a kernel, a VSK $\kappa_{\psi}: \Omega  \times \Omega \longrightarrow \mathbb{R}$ is defined as
\begin{equation*}
	\kappa_{\psi}(\bs{{x}},\bs{{y}})\coloneqq \kappa_\varepsilon((\bs{{x}},\psi(\bs{x})),(\bs{y},\psi(\bs{y})),
\end{equation*}
for $\bs{x},\bs{y}\in\Omega$. Note that we can consider the related native space $\mathcal{N}_{\kappa_{\psi}}$ spanned by the functions $\kappa_{\psi}(\cdot,\bs{{x}})$, $\bs{x}\in\Omega$. Among the properties of VSKs, we recall that:
\begin{enumerate}
    \item[A1.] The theoretical analysis of the VSK setting reduces to the analysis of the classical framework in the augmented space $\Omega\times\R$.
    \item[A2.] The spaces $\mathcal{N}_{\kappa}$ and $\mathcal{N}_{\kappa_{\psi}}$ are isometrically isomorphic.
\end{enumerate}

The shape parameter is often set to $\varepsilon=1$ in the VSK framework. Therefore, the role played by the shape function is duplex. On the one hand, the tuning of the shape parameter is substituted with the choice of the shape function. On the other hand, $\psi$ can lead to an improvement in the conditioning of the approximation process by increasing the value of the separation distance in the augmented domain $\Omega\times\R$. In the following, we do not necessarily restrict to a fixed value of the shape parameter, as also done in the context of moving least squares \cite{Esfahani23}. Note that  $\kappa_{\psi}$ is (strictly) positive definite if so is $\kappa$.

\section{Interpolation via Mapped Bases}\label{sec:fake}

In this section, we present the so called Fake Nodes Approach (FNA) for the kernel-based approximation framework; for a more general and comprehensive treatment we refer to \cite{DeMarchi21}.

Let $S: \Omega \longrightarrow \mathbb{R}^d$ be an injective map. Our purpose is to construct an interpolant $\interp^S_{f,X}$ of the function $f$ in the space
$$\mathcal{H}_{\kappa^S} =  \mathrm{span} \left\{ \kappa^S(\cdot,\boldsymbol{x}), \; \boldsymbol{x} \in \Omega \right\},$$
where $\kappa^S(\boldsymbol{x},\bs{y})\coloneqq \kappa(S(\bs{x}),S(\bs{y}))$. We have
\begin{equation*}
\interp^S_{f,X}(\boldsymbol{x})=  \sum_{i=1}^N c^S_i \kappa^S(\boldsymbol{x},\boldsymbol{x}_i) = \sum_{i=1}^N c^S_i \kappa(S(\boldsymbol{x}),S(\boldsymbol{x}_i))=\interp_{g,X}(S(\boldsymbol{x})), \quad \boldsymbol{x} \in \Omega,
\end{equation*}
where $g_{|S(X)}=f_{|X}$. In other words, the construction of the interpolant $\interp^S_{f,X}  \in \mathcal{H}_{\kappa^S}$ is equivalent to the construction of a classical interpolant $\interp_{g,S(X)} \in \mathcal{H}_{\kappa}$ at the \emph{fake} nodes $S(X)$. Similarly to the VSK setting, the FNA is provided with the following properties (cf. A1 and A2):
\begin{enumerate}
    \item[B1.] The theoretical analysis of the FNA interpolant reduces to the analysis of the interpolant in the classical framework. This was proved in \cite[Proposition 3.4]{DeMarchi21} and it is linked to the inheritance property of the \textit{Lebesgue constant} \cite{BLSE:RN017988920}.
    \item[B2.] The spaces $\mathcal{N}_{\kappa}$ and $\mathcal{N}_{\kappa^{S}}$ are isometrically isomorphic (a direct consequence of Theorem \ref{thm:iso} and Proposition \ref{prop:cases} stated below).
\end{enumerate}
In previous works, the FNA has been mainly employed for two main purposes. The first is using $S$ to obtain an interpolation design $S(X)$ that leads to a more stable interpolation process with respect to the original set of nodes. For example, this can be achieved in a polynomial-based framework by mapping onto the set of (tensor-product) Chebyshev-Lobatto nodes in the (multi) one-dimensional case. In addition, as we will discuss in Subsection \ref{sec:mvsdk}, $S$ can be constructed in a target-dependent fashion to emulate the possible discontinuities of the underlying function and thus recover accuracy near jump points.

\section{Mapped VSKs}\label{sec:mvsdk}

\subsection{The general framework}

In Sections \ref{sec:vsk} and \ref{sec:fake}, we outlined two different approaches, which however present some similarities and are employed for analogous purposes. In the following, we discuss how the VSK setting and the FNA can be merged in a unified framework. We start by giving the following definition.

\begin{definition}\label{def:mvsks}
    Let $S: \Omega \longrightarrow \mathbb{R}^d$ be an injective map, $\psi: \Omega \longrightarrow \R$ be a shape function and let $\kappa_\varepsilon: (S(\Omega)\times\R)  \times (S(\Omega)\times\R) \longrightarrow \mathbb{R}$ be a kernel. Then, a Mapped VSK (MVSK) $\kappa^S_{\psi}: \Omega  \times \Omega \longrightarrow \mathbb{R}$ is defined as
\begin{equation*}
	\kappa^S_{\psi}(\bs{{x}},\bs{{y}})\coloneqq \kappa_\varepsilon((S(\bs{{x}}),\psi(\bs{x})),(S(\bs{y}),\psi(\bs{y})),
\end{equation*}
for $\bs{x},\bs{y}\in\Omega$.
\end{definition}
We remark that $\kappa^S_{\psi}$ might be defined in different possible equivalent manners. However, the advantage of Definition \ref{def:mvsks} lies in the separation between the actions of $S$ and $\psi$: The function $S$ works in the original dimension $\mathbb{R}^d$, while $\psi$ rules the coordinate of the input in the augmented dimension. Under certain assumptions, a MVSK reduces to a mapped or VSK.
\begin{proposition}\label{prop:cases}
    Let $\kappa^S_{\psi}$ be a MVSK on $\Omega \times \Omega$ built upon a radial kernel $\kappa_\varepsilon$. Then:
    \begin{enumerate}
        \item 
        If $\psi(\bs{x})\equiv \alpha\in\mathbb{R}$, then $\kappa^S_{\psi}=\kappa^S$.
        \item 
        If $S(\bs{x})-S(\bs{y})= \bs{x}-\bs{y}$, (e.g., $S$ is the identity map), then $\kappa^S_{\psi}=\kappa_{\psi}$.        
    \end{enumerate}
\end{proposition}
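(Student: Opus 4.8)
The plan is to verify each of the two cases directly by substituting the stated hypothesis into the defining formula
\[
\kappa^S_{\psi}(\bs{x},\bs{y}) = \kappa_\varepsilon((S(\bs{x}),\psi(\bs{x})),(S(\bs{y}),\psi(\bs{y})))
\]
from Definition \ref{def:mvsks} and comparing the result with the definitions of $\kappa^S$ and $\kappa_{\psi}$ given in Sections \ref{sec:fake} and \ref{sec:vsk}. The crucial structural fact I would exploit throughout is that $\kappa_\varepsilon$ is \emph{radial}: there is a univariate profile $\varphi_\varepsilon$ with $\kappa_\varepsilon(\bs{u},\bs{v}) = \varphi_\varepsilon(\lVert \bs{u}-\bs{v}\lVert_2)$, so that the kernel value depends on its two arguments only through the Euclidean norm of their difference. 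This reduces each claim to showing that the relevant difference vector in the augmented space $S(\Omega)\times\R$ has the same norm as the difference vector for the target kernel.

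For the first case, assume $\psi(\bs{x})\equiv\alpha$. Then the two augmented points are $(S(\bs{x}),\alpha)$ and $(S(\bs{y}),\alpha)$, whose difference is $(S(\bs{x})-S(\bs{y}),0)$. The last coordinate cancels, so the squared Euclidean norm in $\R^{d+1}$ equals $\lVert S(\bs{x})-S(\bs{y})\lVert_2^2$, which is exactly the squared norm of the argument difference appearing in $\kappa^S(\bs{x},\bs{y}) = \kappa(S(\bs{x}),S(\bs{y})) = \varphi_\varepsilon(\lVert S(\bs{x})-S(\bs{y})\lVert_2)$. Radiality then gives $\kappa^S_{\psi}(\bs{x},\bs{y}) = \kappa^S(\bs{x},\bs{y})$ pointwise, establishing the identity of the two kernels.

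For the second case, assume $S(\bs{x})-S(\bs{y}) = \bs{x}-\bs{y}$. The augmented difference vector is now $(S(\bs{x})-S(\bs{y}),\psi(\bs{x})-\psi(\bs{y})) = (\bs{x}-\bs{y},\psi(\bs{x})-\psi(\bs{y}))$, so its squared norm is $\lVert \bs{x}-\bs{y}\lVert_2^2 + (\psi(\bs{x})-\psi(\bs{y}))^2$. This coincides precisely with the squared norm of the augmented difference for the plain VSK, whose defining formula is $\kappa_{\psi}(\bs{x},\bs{y}) = \kappa_\varepsilon((\bs{x},\psi(\bs{x})),(\bs{y},\psi(\bs{y})))$. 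Invoking radiality once more yields $\kappa^S_{\psi}(\bs{x},\bs{y}) = \kappa_{\psi}(\bs{x},\bs{y})$. I would also note parenthetically that the identity map satisfies the hypothesis trivially, justifying the bracketed example in the statement.

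I do not anticipate a genuine obstacle here: the proof is a short computation whose only real content is the reduction via radiality. The one point worth stating carefully is that in the first case it is the \emph{difference} $S(\bs{x})-S(\bs{y})$ whose norm matters, not the individual values, so the cancellation of the constant shape coordinate is what makes the augmented kernel collapse to $\kappa^S$; symmetrically, in the second case it is the hypothesis on the difference $S(\bs{x})-S(\bs{y})$ (weaker than $S$ being the identity) that is exactly what is needed, which is why the proposition phrases the assumption in terms of differences rather than demanding $S=\mathrm{id}$ outright.
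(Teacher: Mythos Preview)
Your argument is correct and matches the paper's proof essentially line for line: the paper also invokes radiality to write $\kappa^S_{\psi}(\bs{x},\bs{y})=\varphi\big(\sqrt{(S(\bs{x})-S(\bs{y}))^2+(\psi(\bs{x})-\psi(\bs{y}))^2}\big)$ and then reads off both cases from this expression. Your exposition is slightly more detailed, but the content is identical.
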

\begin{proof}
    By hypothesis, there exists $\varphi:[0,+\infty)\longrightarrow\mathbb{R}$ such that $\kappa_\varepsilon(\bs{x},\bs{y})=\varphi(\lVert \bs{x}-\bs{y}\lVert_2)$. Therefore, we can write
    \begin{equation*}
    \begin{split}
        \kappa^S_{\psi}(\bs{{x}},\bs{{y}}) &= \varphi(\lVert (S(\bs{{x}}),\psi(\bs{x}))-(S(\bs{{y}}),\psi(\bs{y}))\lVert_2)\\ &=
    \varphi(\sqrt{(S(\bs{x})-S(\bs{y}))^2+(\psi(\bs{x})-\psi(\bs{y}))^2}),
    \end{split}
    \end{equation*}
    from which the two theses follow.
\end{proof}
We also prove the following.
\begin{theorem}\label{thm:iso}
    The spaces $\mathcal{H}_{\kappa}$ and $\mathcal{H}_{\kappa^S_{\psi}}$ are isometric.
\end{theorem}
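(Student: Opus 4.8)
The plan is to exhibit an explicit linear isometry between the two dot-product spaces by transporting the generating kernel sections through the map underlying the MVSK construction. First I would set $\Phi:\Omega\longrightarrow S(\Omega)\times\mathbb{R}$, $\Phi(\bs{x})\coloneqq(S(\bs{x}),\psi(\bs{x}))$, and record that $\Phi$ is injective (since $S$ is) and that, by Definition \ref{def:mvsks}, $\kappa^S_{\psi}(\bs{x},\bs{y})=\kappa_\varepsilon(\Phi(\bs{x}),\Phi(\bs{y}))$. Thus every generator $\kappa^S_{\psi}(\cdot,\bs{x})$ of $\mathcal{H}_{\kappa^S_{\psi}}$ corresponds to the generator $\kappa_\varepsilon(\cdot,\Phi(\bs{x}))$ of $\mathcal{H}_\kappa$, understood as the span of the sections based at the points of $\Phi(\Omega)\subseteq S(\Omega)\times\mathbb{R}$. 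I would then define $T:\mathcal{H}_{\kappa^S_{\psi}}\longrightarrow\mathcal{H}_\kappa$ on generators by $T\big(\kappa^S_{\psi}(\cdot,\bs{x})\big)\coloneqq\kappa_\varepsilon(\cdot,\Phi(\bs{x}))$ and extend it by linearity.

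The core of the argument is to verify that $T$ is a linear bijection preserving the inner product. For the inner product I would use the reproducing property of both kernels: on generators, $(\kappa^S_{\psi}(\cdot,\bs{x}),\kappa^S_{\psi}(\cdot,\bs{y}))_{\mathcal{H}_{\kappa^S_{\psi}}}=\kappa^S_{\psi}(\bs{x},\bs{y})=\kappa_\varepsilon(\Phi(\bs{x}),\Phi(\bs{y}))=(\kappa_\varepsilon(\cdot,\Phi(\bs{x})),\kappa_\varepsilon(\cdot,\Phi(\bs{y})))_{\mathcal{H}_\kappa}$, and I would then extend bilinearly to finite linear combinations, which shows simultaneously that $T$ preserves norms and inner products. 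Surjectivity is immediate, since the image of $T$ is precisely $\mathrm{span}\{\kappa_\varepsilon(\cdot,\Phi(\bs{x})):\bs{x}\in\Omega\}=\mathcal{H}_\kappa$, while injectivity follows from norm preservation.

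The step I expect to be delicate is well-definedness, i.e. ensuring that $T$ does not depend on the chosen representation of an element as a combination of generators. Concretely, if $\sum_i a_i\,\kappa^S_{\psi}(\cdot,\bs{x}_i)\equiv 0$ on $\Omega$, I must show $\sum_i a_i\,\kappa_\varepsilon(\cdot,\Phi(\bs{x}_i))\equiv 0$ on the relevant domain. Evaluating the first identity at an arbitrary $\bs{y}\in\Omega$ gives $\sum_i a_i\,\kappa_\varepsilon(\Phi(\bs{y}),\Phi(\bs{x}_i))=0$, so as $\bs{y}$ ranges over $\Omega$ the function $\sum_i a_i\,\kappa_\varepsilon(\cdot,\Phi(\bs{x}_i))$ vanishes on $\Phi(\Omega)$, which is exactly the assertion that its image is the zero element of $\mathcal{H}_\kappa$ once that space is read over the node set $\Phi(\Omega)$. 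This is the point where one must be careful to take $\mathcal{H}_\kappa$ as the span of sections based at $\Phi(\Omega)$, the graph of $\psi$ over $S(\Omega)$, rather than over the whole augmented domain; with that reading the argument is symmetric, and the same computation run backwards (using injectivity of $\Phi$) yields the inverse map and closes the proof. Finally I would remark that an isometry of these dot-product spaces extends to their completions, giving the isometry of the native spaces $\mathcal{N}_{\kappa}$ and $\mathcal{N}_{\kappa^S_{\psi}}$ invoked in properties A2 and B2.
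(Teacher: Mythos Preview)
Your argument is correct and follows essentially the same route as the paper: both hinge on the injective map $\Phi(\bs{x})=(S(\bs{x}),\psi(\bs{x}))$ (the paper writes it $\Lambda_\psi^S$) and the identity $\kappa^S_\psi(\bs{x},\bs{y})=\kappa_\varepsilon(\Phi(\bs{x}),\Phi(\bs{y}))$. The only difference is packaging: the paper observes that this exhibits $\kappa^S_\psi$ as a push-forward of $\kappa$ and invokes \cite[Theorem~2.9]{Saitoh16} directly, whereas you unpack that result by hand, defining $T$ on generators and checking well-definedness and inner-product preservation explicitly; your remark that $\mathcal{H}_\kappa$ should be read over $\Phi(\Omega)$ rather than the full augmented domain is a useful clarification the paper leaves implicit in the citation.
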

\begin{proof}
    By defining the map $\Lambda_\psi^S(\bs{x})\coloneqq (S(\bs{x}),\psi(\bs{x}))$ we can see $\kappa^S_{\psi}$ as the push-forward of $\kappa$ in the sense provided in \cite[Equation 2.52]{Saitoh16}. Then, since $S$ is injective, the proof follows from \cite[Theorem 2.9]{Saitoh16}.
\end{proof}
As a consequence of Theorem \ref{thm:iso}, the native spaces $\mathcal{N}_{\kappa}$ and $\mathcal{N}_{\kappa^S_{\psi}}$ are isometrically isomorphic (cf. \cite[Section 3]{Bozzini15}).

Independently of the target function to be recovered, we recall that the conditioning of the interpolation problem is related to the $\ell_2$-conditioning of the kernel matrix $\mathrm{cond}(\mK)=\lambda_{\max}/\lambda_{\min}$, being $\lambda_{\max},\;\lambda_{\min}$ the maximum and minimum eigenvalue, respectively. It is known that $\lambda_{\min}$ decays according to the separation distance $q_X$, in a way that is influenced by the regularity of the kernel (see \cite[Chapter 16]{Fasshauer}). In this direction, MVSKs can be employed in order to increase the separation distance and thus improve the conditioning of the interpolation scheme, e.g., by separating clustered nodes. On the other hand, diminishing the fill distance may improve the accuracy of the method (see \eqref{eq:conv_rate}). We will experiment on this in Section \ref{sec:experiments}.

\subsection{Working with mapped discontinuous kernels}

In the following, we focus on the case of discontinuous functions. First, we briefly review in which manners VSKs and the FNA were used in the discontinuous setting.

\subsubsection{Variably Scaled Discontinuous Kernels (VSDKs).} The idea proposed in \cite{vskjump} and further investigated in \cite{vskmpi} was to define the scaling function $\psi$ to be discontinuous at the jumps of the target function $f:\Omega\longrightarrow\R$. In order to do so, we assume $\Omega\subset \R^d$ to be a bounded set such that:
\begin{itemize}
    \item $\Omega$ is the union of $m$ pairwise disjoint sets $\Omega_k$, $k \in \{1, \ldots, m\}$.
    \item Each subset $\Omega_k$, $k=1,\dots,m$, has a Lipschitz boundary.
    \item The discontinuity points of $f$ are contained in the union of the boundaries of the subsets $\Omega_k$, $k=1,\dots,m$.
\end{itemize}
Then, letting $\bs{\alpha} = (\alpha_1, \ldots, \alpha_m)$, $\alpha_i \in \R$, the scaling function $\psi$ is such that:
\begin{itemize}
    \item $\psi$ is piecewise constant, such that $\restr{\psi(\bs{x})}{\Omega_k} = \alpha_k$. 
    \item $\alpha_i \neq \alpha_j$ if $\Omega_i$ and $\Omega_j$ are neighboring sets.
\end{itemize}
The theoretical analysis carried out in the referring papers then focused on radial kernels whose related univariate function $\varphi$ has the following Fourier decay
\begin{equation} \label{eq:fourier}
(\mathrm{F} {\varphi})(\bs{\omega}) \sim (1+\lVert\bs{\omega}\lVert^2)^{-s-\frac{1}{2}}, \quad s > \frac{d-1}{2}.
\end{equation}
The native space of the kernels that satisfy \eqref{eq:fourier}, e.g. Matérn and Wendland kernels, is a Sobolev space \cite[Chapter 10]{Wendland05}. In order to present an error bound in terms of Sobolev spaces norm for the VSDK setting, we introduce two necessary ingredients:
\begin{enumerate}
    \item 
    The \textit{regional} fill-distance
    \[ h_k \coloneqq h_{X,\Omega_k} = \sup_{\bs{x} \in \Omega_k} \min_{\bs{x}_k \in {X} \cap \Omega_k} \lVert \bs{x} - \bs{x}_k \lVert_2\]
    and the \textit{global} fill distance
    \[ h \coloneqq \max_{k \in \{1, \ldots, m\}} h_k.\]
    \item
    Letting $s \geq 0$ and $1 \leq p \leq \infty$, we define the space
    \begin{equation*}
    \mathcal{W}\mathcal{P}_p^s(\Omega) \coloneqq \left\{ f: \Omega \longrightarrow \R \:| \: \restr{f}{\Omega_k} \in \mathcal{W}_p^s(\Omega_k),\; k \in \{1, \ldots, m\} \right\},
    \end{equation*}
    which contains the piecewise smooth functions $f$ on $\Omega$ whose restriction to any subregion $\Omega_k$, $i=k,\dots,m$, is contained in the standard Sobolev space $\mathcal{W}_p^s(\Omega_k)$. The space $\mathcal{W}\mathcal{P}_p^s(\Omega)$ is endowed with the norm
    \begin{equation*}
    	\lVert f \lVert_{\mathcal{W}\mathcal{P}_p^s(\Omega)}^p = \sum_{k = 1}^m \lVert \restr{f}{\Omega_k}\lVert_{\mathcal{W}_p^s(\Omega_k)}^p.
    \end{equation*}
\end{enumerate}

In the outlined assumptions and letting $\interp^\psi_{f,X}$ be the kernel-based interpolant of $f$ at $X$ built upon the VSDK $\kappa_\psi$, in \cite[Theorem 3.4]{vskmpi} the authors proved the following. Let $s > 0$, $1 \leq q \leq \infty$ and $t\in\mathbb{N}_0$ such that $\lfloor s \rfloor > t + \frac{d}{2}$. Then, for $f \in \mathcal{W}\mathcal{P}_2^s(\Omega)$ and \textit{sufficiently small} $h$ we have that
\begin{equation} \label{eq:errorestimate}
\lVert f - \interp^\psi_{f,X}\lVert_{\mathcal{W}\mathcal{P}_q^{t}(\Omega)} \leq C h^{s-t-d(1/2-1/q)_+} \lVert f\lVert_{\mathcal{W}\mathcal{P}_2^{s}(\Omega)},
\end{equation}
where the constant $C > 0$ is independent of $h$.

\subsubsection{The S-Gibbs map in the FNA.} In the mapped bases approach, similarly to the VSDK framework, the intuition is to map the nodes in order to create some \textit{gaps} in presence of discontinuities. To do so, considering the collection of subsets $\Omega_1,\dots,\Omega_m$ employed to construct VSDKs, the so-called \textit{S-Gibbs map} is designed as
\begin{equation*}
	S(\bs{x}) = \bs{x} +  \sum_{k=1}^m \bs{\beta}_k \Chi_{\Omega_k}(\bs{x}), 
\end{equation*}
where $\bs{\beta}_k = (k\beta,\dots,k\beta)\in\mathbb{R}^d$, $\beta\in\mathbb{R}$, and $\Chi_{\Omega_k}$ is the characteristic function corresponding to $\Omega_k$. In \cite[Section 4.2]{DeMarchi21}, the authors discussed the analogies between a VSDK $\kappa_\psi$ and a mapped kernels $\kappa^S$ constructed via the S-Gibbs map. It turned out that these two approaches can lead to \textit{similar} results for certain values of the vectors of parameters $\bs{\alpha}$ and $\bs{\beta}_k$. This is due to the fact that the interpolation matrices are \textit{close} being the kernel radial.

\subsubsection{Mapped VSDKs (MVSDKs).} In the mixed approach with the mapped VSK kernel $\kappa_\psi^S$, we deal with the jumps of the underlying function as follows.
\begin{itemize}
    \item We define $\psi$ as in the VSDKs framework. Therefore, the role of the shape function is to mimic the jumps of the target function and thus to prevent the appearance of the Gibbs phenomenon.
    \item Since the discontinuities are already addressed by $\psi$, we employ $S$ to map the set of nodes $X$ to obtain improved distributions locally on each $\Omega_k$, meaning that we aim at diminishing the global fill distance and increasing the global separation distance defined as
    \[ q \coloneqq \min_{k \in \{1, \ldots, m\}} q_k,\]
    being $q_k\coloneqq q_{X_k}= \dfrac{1}{2} \min_{  i \neq j} \lVert \boldsymbol{x}_i - \boldsymbol{x}_j \lVert_2,\; \bs{x}_i,\bs{x}_j\in\Omega_k$.
\end{itemize}
Recalling the error estimate in \eqref{eq:errorestimate}, this proposed construction for MVSDKs may lead to better results, being $h$ smaller. Moreover, by increasing the separation distance, an improvement in the conditioning of the scheme with respect to classical VSDKs is likely to be obtained. We test these aspects in some numerical examples in the next section.

\section{Numerical tests with MVSDKs}\label{sec:experiments}

The purpose of this section is to provide a numerical example to show the benefits of the MVSDK framework in comparison to VSDKs and classical RBF interpolation. To do so, we set $\Omega=[-1,1]^2$ and letting $\bs{x}=(x_1,x_2)$ we consider the target function
\begin{equation*}
	f:\Omega\longrightarrow\R,\quad f(\bs{x})=\left \{
	\begin{array}{ll}
	x_1+x_2, & \quad x_1<-0.3,\\
	\sin(x_1-2x_2),  & \quad 0\le x_1<0.5,\\
	0, & \quad  \textrm{otherwise.}\\
	\end{array} \right.
\end{equation*}
To deal with the jumps of $f$, we define the shape function
\begin{equation*}
	\psi:\Omega\longrightarrow\R,\quad \psi(\bs{x})=\left \{
	\begin{array}{ll}
	0, & \quad x_1<-0.3,\\
    1, & \quad -0.3\le x_1 < 0,\\
	2,  & \quad 0\le x_1<0.5,\\
	3, & \quad  x_1\ge 0.5.\\
	\end{array} \right.
\end{equation*}
As far as the nodes are concerned, we let $G_N$ be a set of $N$ nodes that are sampled from a bivariate normal distribution with mean $\bs{\mu}=(0,0)$ and covariance matrix $\mathsf{\Sigma}=0.1\cdot\mathsf{I}$, being $\mathsf{I}$ the $2\times 2$ identity matrix. We can then consider the following mapping function $S:\Omega\longrightarrow \Omega$ defined as
\begin{equation*}
    S(x_1,x_2)=\bigg(1+\mathrm{erf}\bigg(\frac{x_1}{\sqrt{0.2}}\bigg),1+\mathrm{erf}\bigg(\frac{x_2}{\sqrt{0.2}}\bigg)\bigg)-1,
\end{equation*}
where erf is the well-known \textit{error function}. To clarify the idea behind the construction of $S$, it is known from classical probability theory that if $z_1,\dots,z_n$ are sampled according to a normal distribution of mean $\mu$ and standard deviation $\sigma$, then  $0.5(1+\textrm{erf}((z_i-\mu)/(\sqrt{2}\sigma)))$, $i=1,\dots,n$, are distributed uniformly in $[0,1]$. Therefore, our set of nodes $G_N$ is mapped to a uniform distribution in the square $[-1,1]^2$, as displayed in Figure \ref{fig:nodi_distribuiti}. We point out that possible nodes in $G_N$ that are not in $\Omega$ are removed from the set. 
\begin{figure}
    \centering
    \includegraphics[scale=0.40]{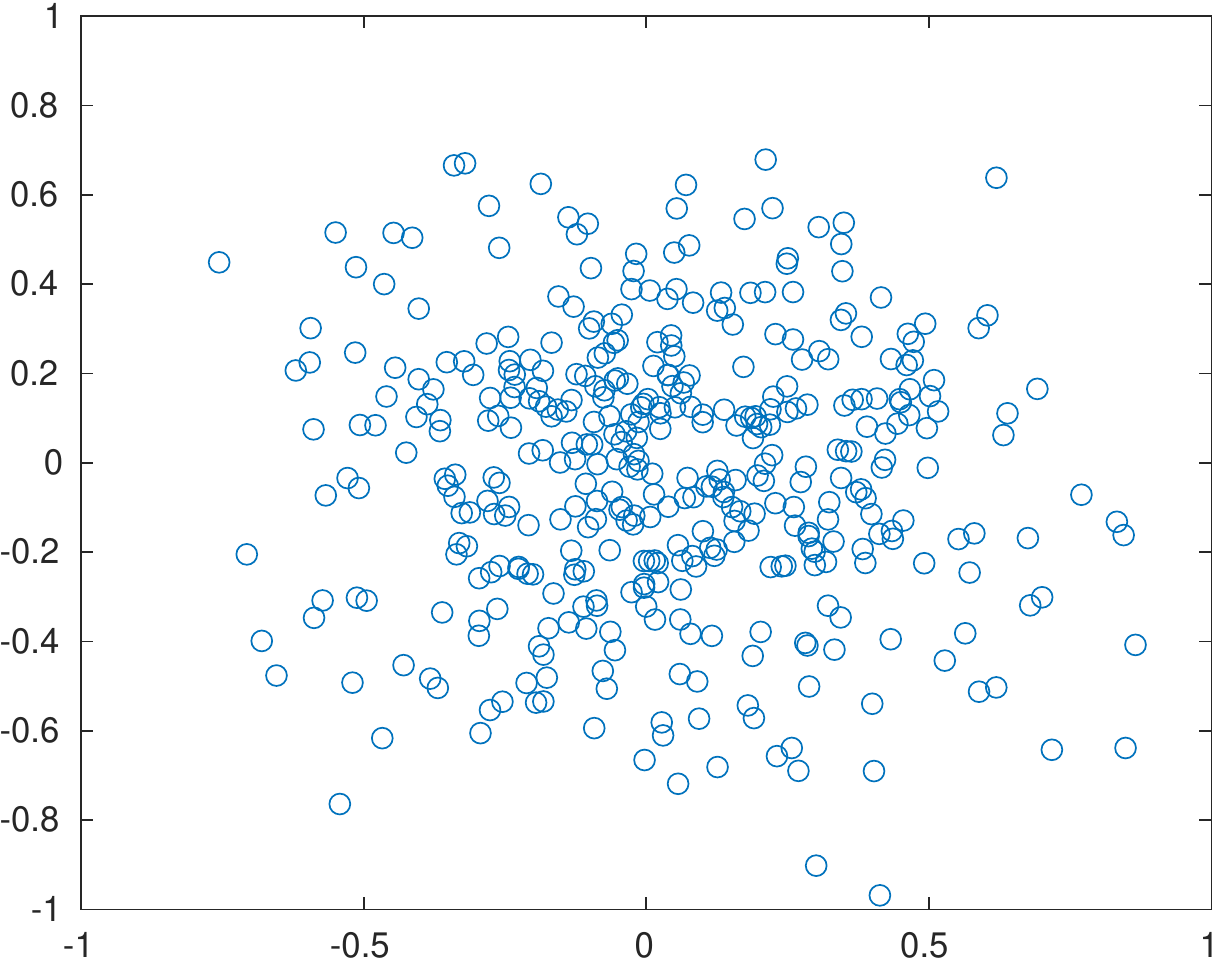}
    \includegraphics[scale=0.40]{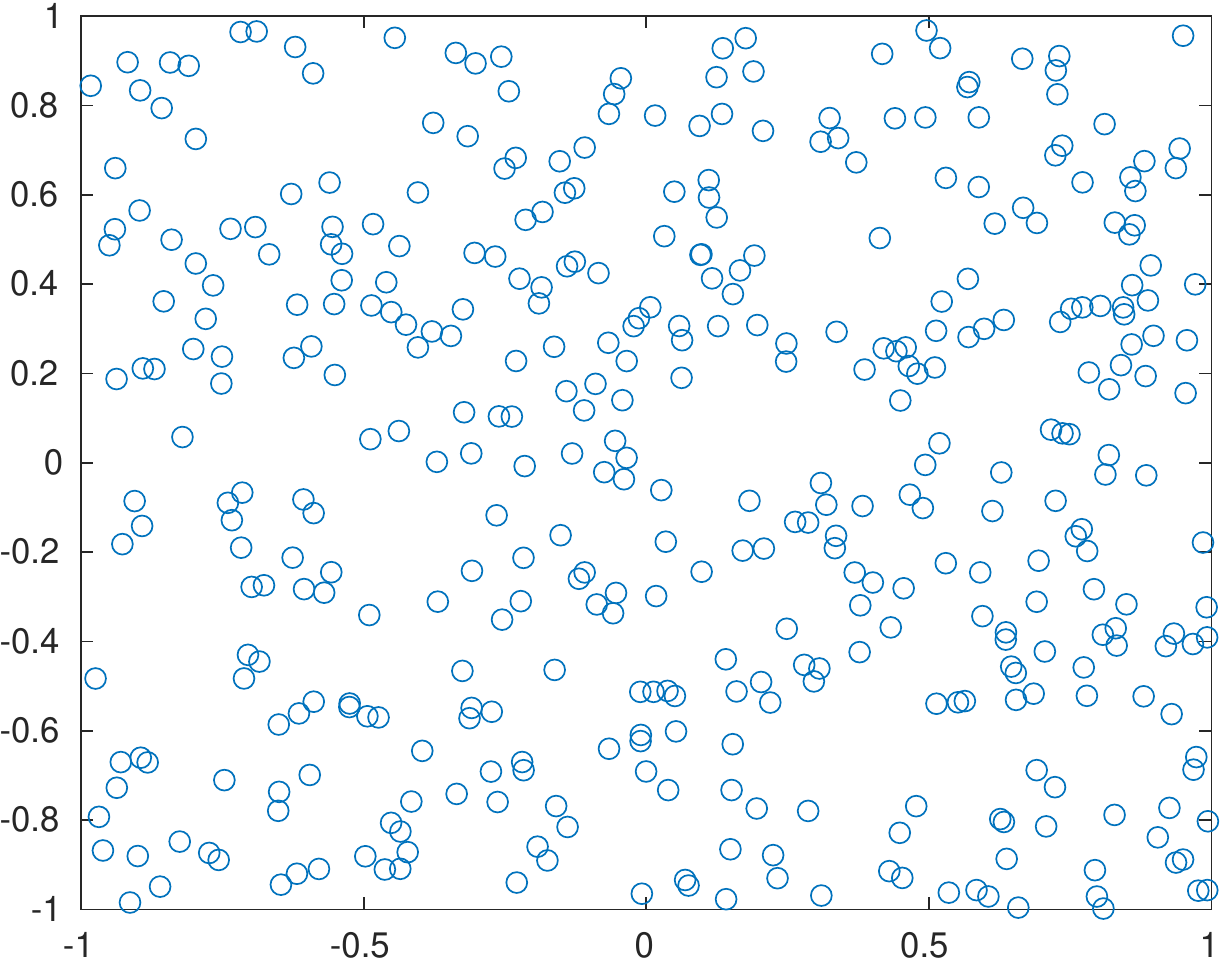}      
    \caption{$N=400$. Left: $G_N$. Right: $S(G_N)$.}
    \label{fig:nodi_distribuiti}
\end{figure}
Consequently, the mapped set $S(G_N)$ is very likely to present smaller fill distance and larger separation distance than $G_N$. The interpolation results are evaluated on a finer $M\times M$ equispaced grid $\Xi_{M^2}$ in $\Omega$. Precisely, we compute the Root Mean Square Error (RMSE)
\begin{align}\label{mae_rmse}
	{\rm RMSE} = \sqrt{\frac{1}{M^2}\sum_{k=1}^{M^2} \left(f(\bs{\xi}_k)-\iota(\bs{\xi}_k)\right)^2},
\end{align}
where $\bs{\xi}_k\in\Xi_{M^2}$ and $\iota=\interp_{f,G_N},\;\interp^\psi_{f,G_N},\; \interp^{\psi,S}_{f,G_N}$ are the interpolants constructed via the classical, the VSDK and the MVSDK. The shape parameter $\varepsilon$ is chosen between $200$ equispaced values in the interval $[0.01,50]$ via Leave-One-Out Cross Validation (LOOCV) \cite{Ling22}. Finally, we let $N$ vary between $10$ and $500$ and we test two radial kernels:
\begin{equation*}
\begin{split}
    &  \varphi_{W}(r)=(1-\varepsilon r)_+^2\quad\textrm{Wendland $C^0$,}\\
    &  \varphi_{M}(r)=e^{-\varepsilon r}(15+15\varepsilon r+6(\varepsilon r)^2+(\varepsilon r)^3)\quad\textrm{Matérn $C^6$.}  
\end{split}
\end{equation*}

In Figure \ref{fig:sep_fill}, we show the behavior of the separation and fill distances, while in Figure \ref{fig:rmse} we show the RMSEs achieved with both $\varphi_{W}$ and $\varphi_{M}$.

\begin{figure}
    \centering
    \includegraphics[scale=0.40]{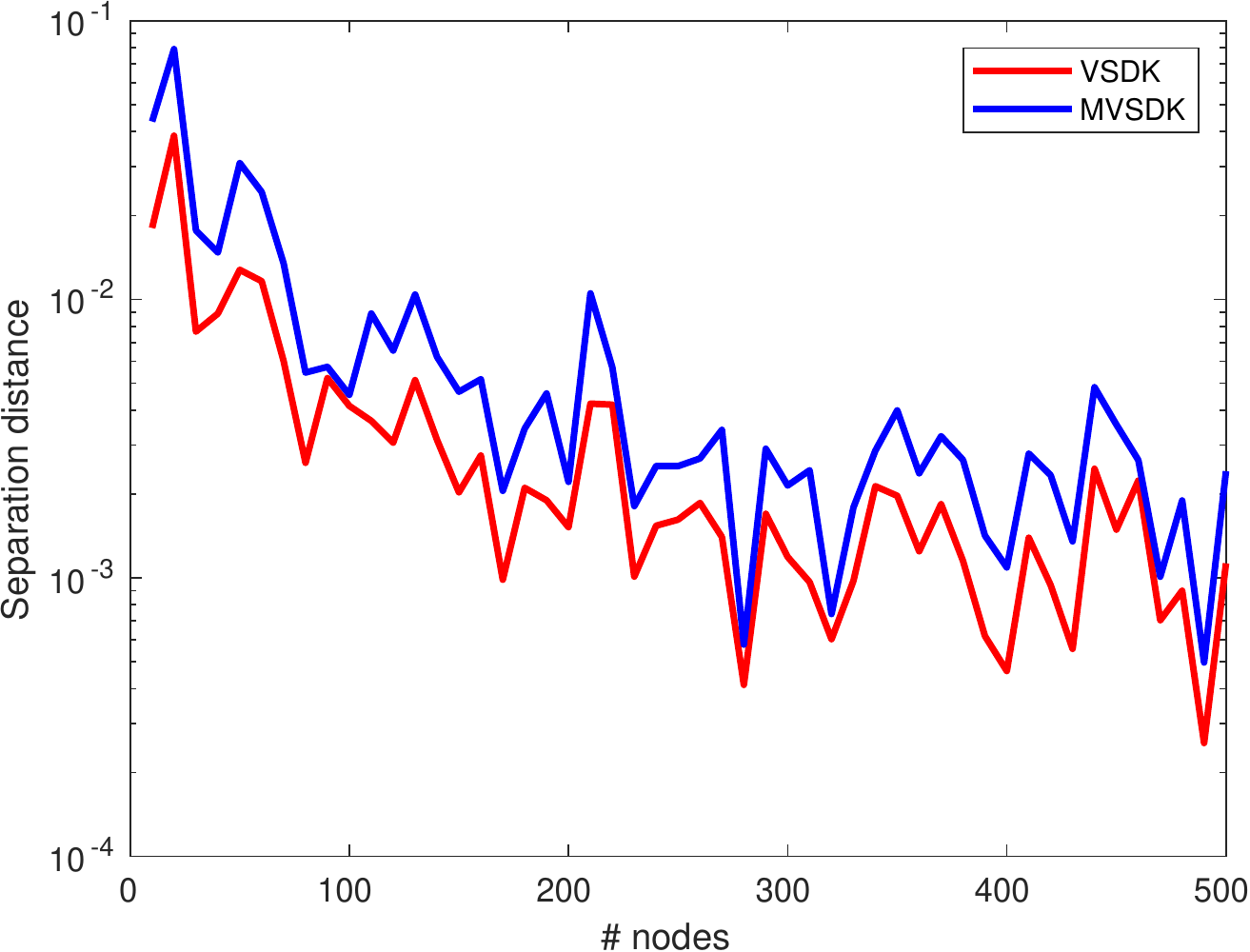}
    \includegraphics[scale=0.40]{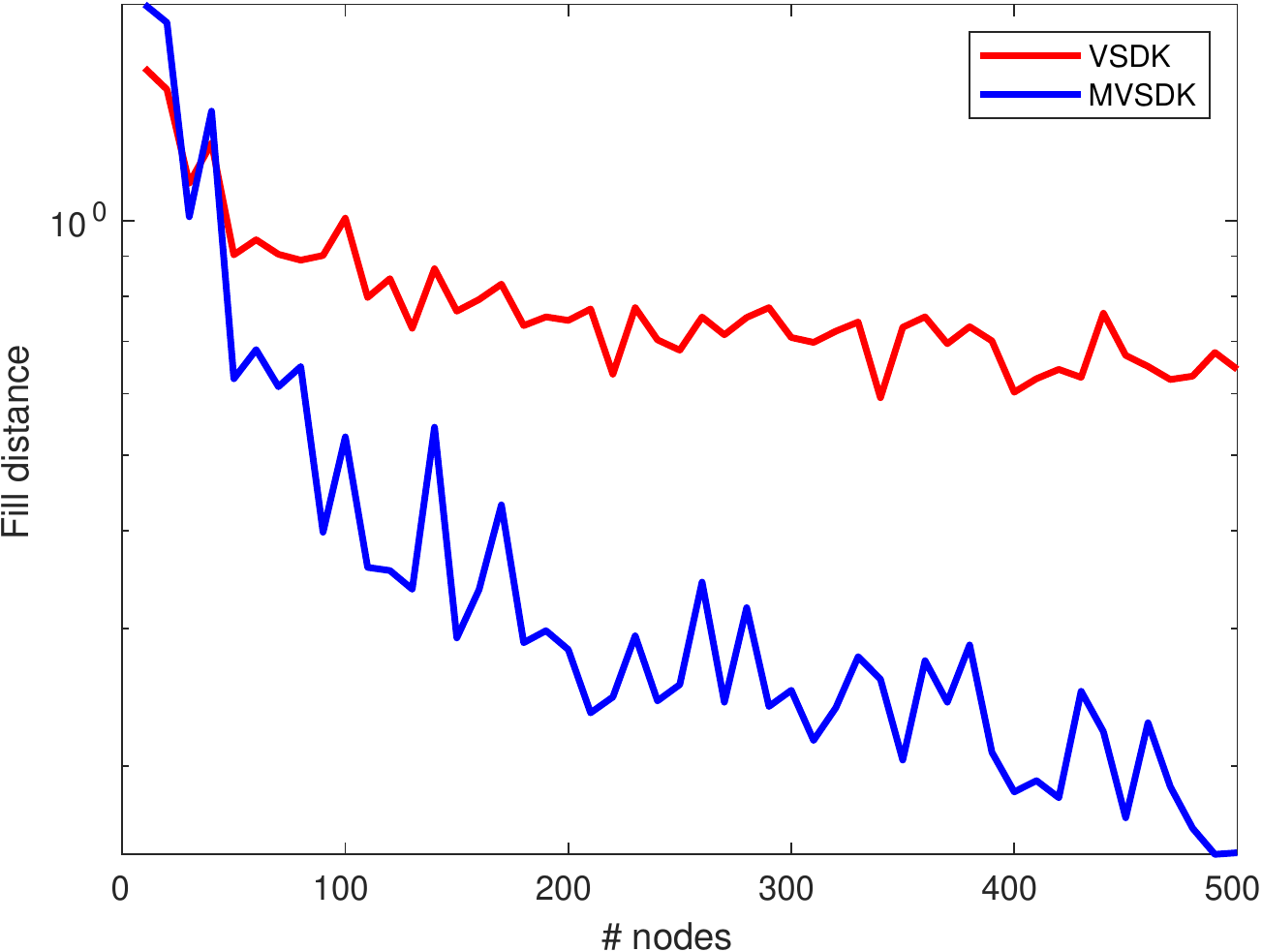}      
    \caption{The separation distance (left) and the fill distance (right) varying $N$.}
    \label{fig:sep_fill}
\end{figure}

\begin{figure}
    \centering
    \includegraphics[scale=0.40]{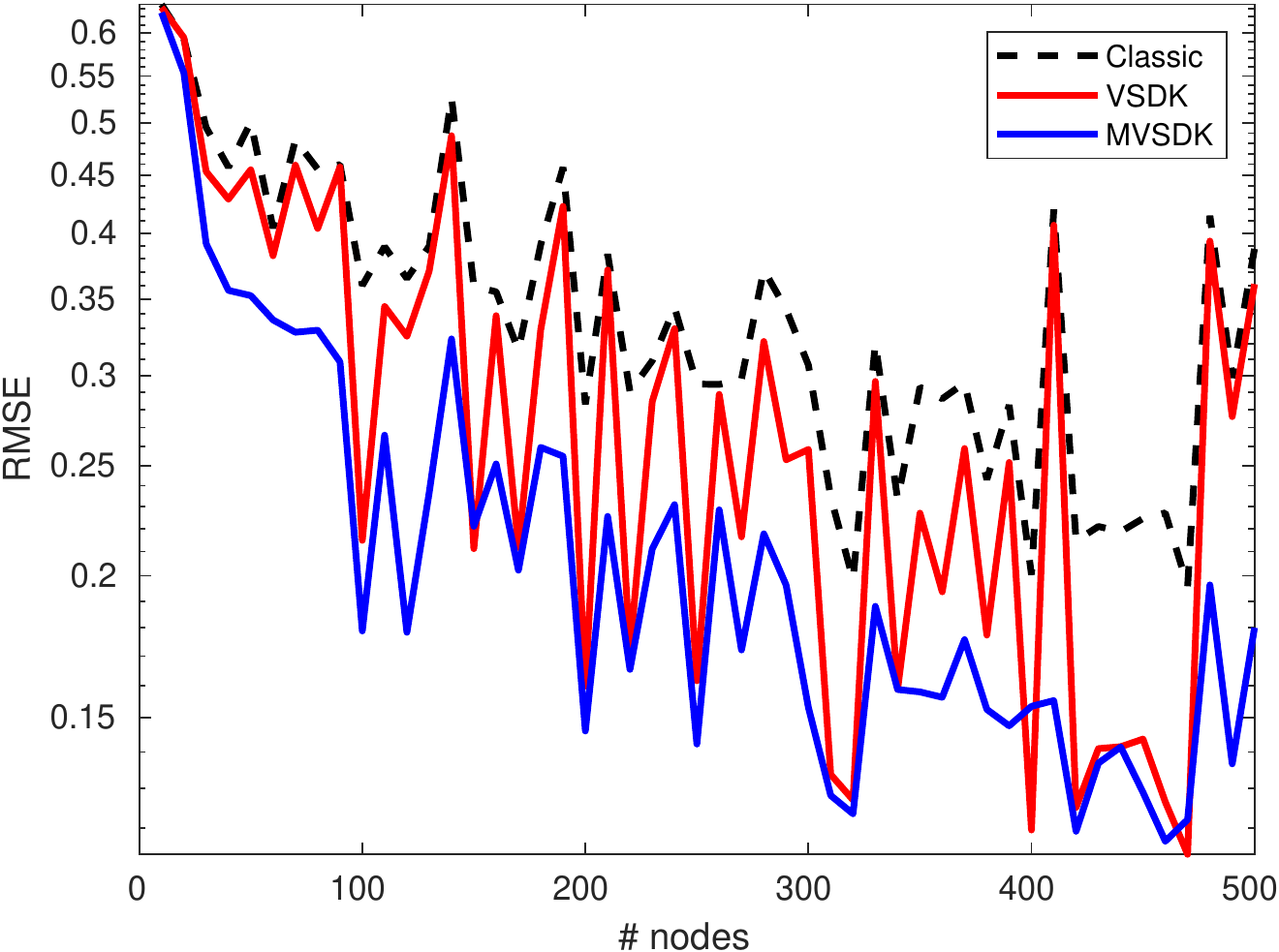}
    \includegraphics[scale=0.40]{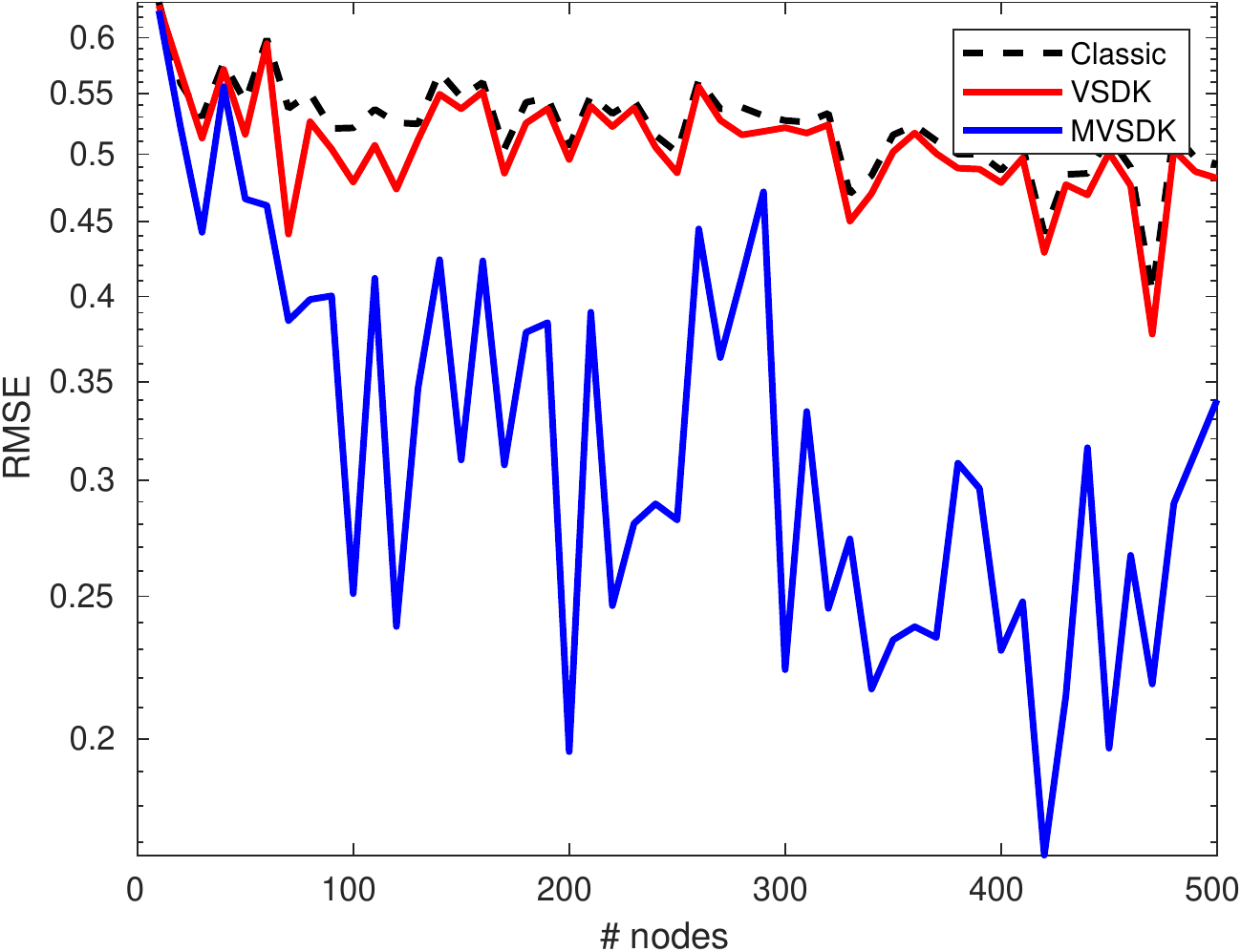}      
    \caption{The RMSE obtained using $\varphi_W$ (left) and $\varphi_M$ (right) varying $N$.}
    \label{fig:rmse}
\end{figure}

The plots in Figure \ref{fig:sep_fill} display the benefits in employing $S$ in the MVSDK in terms of diminished fill distance and increased separation distance. In Figure \ref{fig:rmse}, we observe that MVSDKs are more effective in the case of the chosen Matérn kernel. This is due to the fact that this kernel is more regular than the chosen Wendland kernel, therefore it is more prone to provide an ill-conditioned interpolation process. Furthermore, we remark that the sets of nodes $G_N$, $N=10,\dots,500$ are not nested, and they are clustered around the origin. Therefore, we can not expect an accurate recovering of the function $f$, nor convergence increasing $N$.

\section{Applications to the STIX Imaging Framework}\label{sec:stix}

In order to test the proposed MVSKs in real applied sciences, we focus on solar hard X-ray imaging and, specifically, on the ESA {{STIX}} telescope \cite{refId0}, on board of Solar Orbiter mission (see Figure \ref{fig:0}). Hard X-ray telescopes provide experimental measurements, named visibilities,  of the Fourier transform of the incoming photon flux at specific points of the spatial frequency plane.
In the case of STIX, $30$ subcollimators relying on the Moiré pattern technology provide $N=60$ visibilities on 10 circles of the frequency plane with increasing radii from about $2.79\times 10^{-3}$ arcsec$^{-1}$ to $7.02\times 10^{-2}$ arcsec$^{-1}$ (see Figure \ref{fig:0}). We observe that the visibilities lying in the lower half plane are obtained by reflecting the visibilities in the upper half with respect to the origin.

\begin{figure}
    \centering
    \raisebox{0.3\height}{\includegraphics[scale = .25]{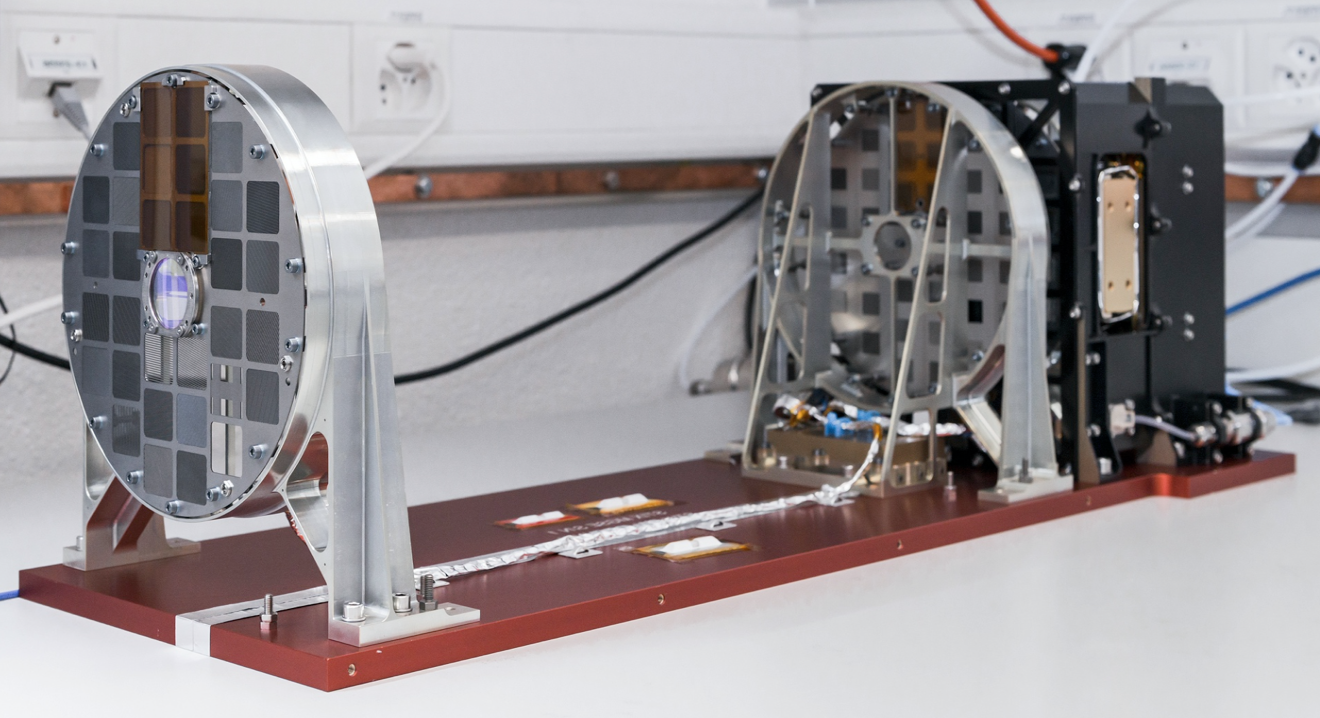}}
    \caption{The Spectrometer/Telescope for Imaging X-rays (STIX).}
    \label{fig:0}
\end{figure}

\begin{figure}
    \centering
    \includegraphics[scale=0.40]{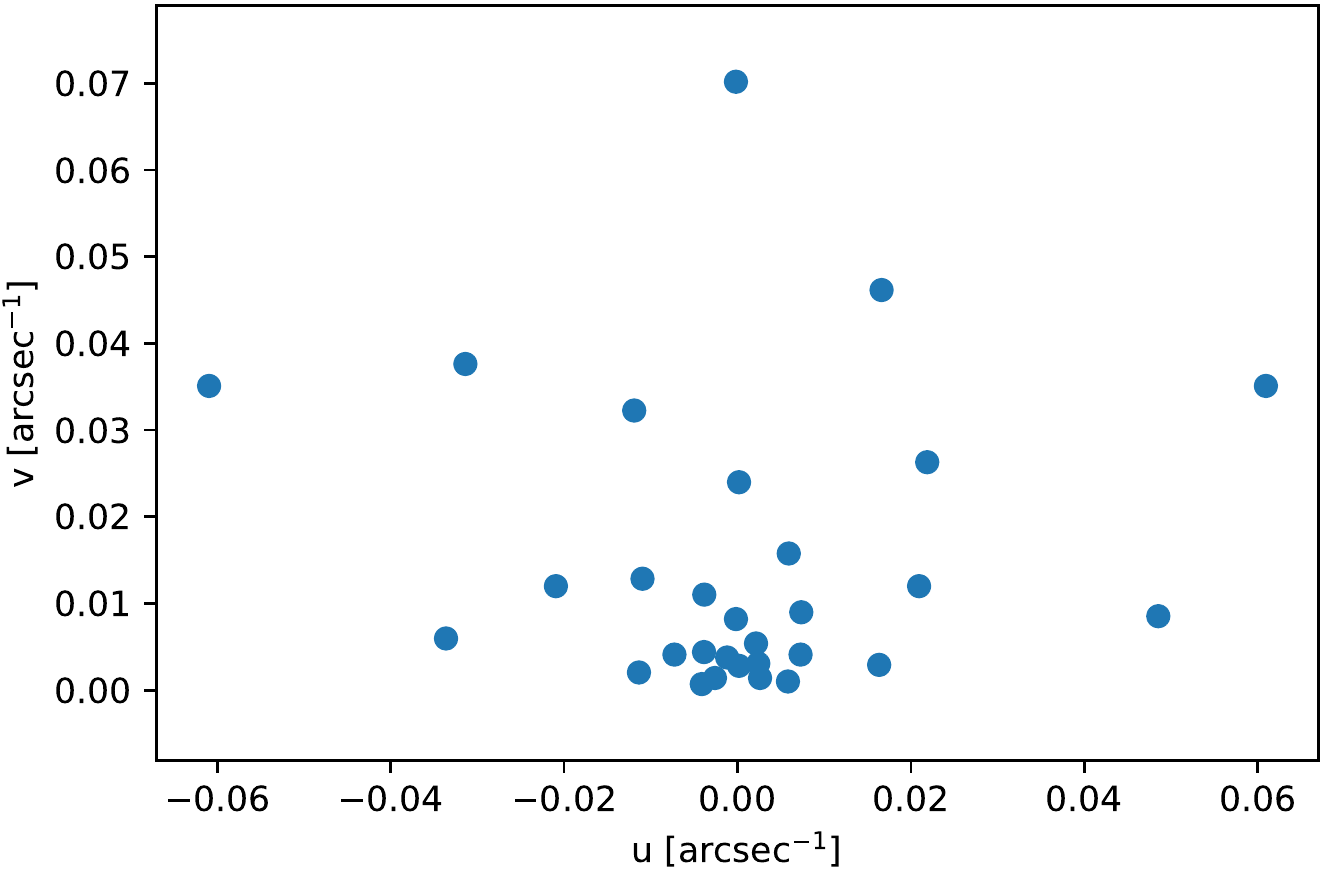}
    \includegraphics[scale=0.40]{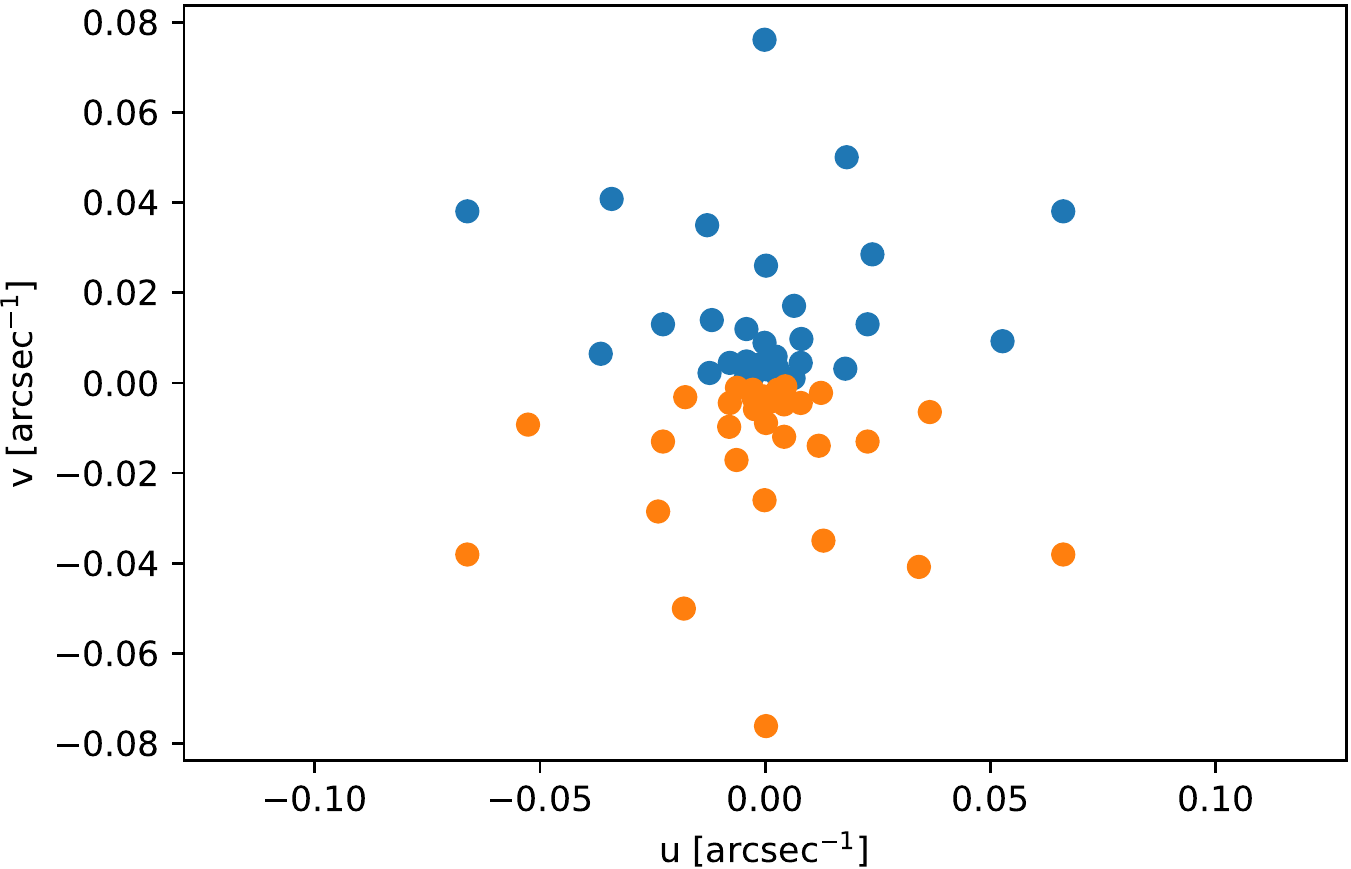} 
    \caption{STIX visibilities: reflecting with respect to the origin leads to a total number of $60$ visibilities.}
    \label{fig:0.1}
\end{figure}

We denote by ${\bf{f}}$ the vector whose components are the discretized values of the incoming flux, by ${\bf{F}}$ the discretized Fourier transform sampled at the set of  points $ \{ {\bf u}_i=(u_i,v_i) \}_{i=1}^{N}$  in the $({\mbox{u}},{\mbox{v}})$-plane and with ${\bf{V}}$ the vector whose $n$ components are the observed visibilities. Then, the image formation model in this framework can be approximated by
\begin{equation}\label{b1}
{\bf{V}} = {\bf{F}} {\bf{f}}~.
\end{equation}

\subsection{The imaging process}

Many inversion methods have been formulated to express the STIX observations as images, see e.g.  \cite{Aschwanden,Benvenuto,Bonettini,Cornwell,Massa_2020,Massone2009HARDXI,felix2017compressed}. The approach that we propose consists of two steps: interpolation of the visibilities so that we obtain the visibility surfaces and the inversion of the so generated surfaces with rather standard techniques. This idea was already used for the dismissed NASA telescope Reuven Ramaty High Energy Solar Spectroscopic Imager (RHESSI) \cite{enlighten1658}, and its IDL implementation  called uv$\_$smooth \cite{Massone2009HARDXI}, can be find in the NASA {{Solar SoftWare (SSW)}} tree. 

The uv$\_$smooth code addresses equation (\ref{b1}) by means of an interpolation and extrapolation procedure in which the interpolation step is carried out via an algorithm based on spline functions and the extrapolation step is realized by means of a soft-thresholding scheme \cite{Daubechies,Massone1}. As far as the second step is concerned, we will rely on the well-established projected Landweber iterative method \cite{Piana_1997}. The first step instead will be carried out with our MVSKs. 

\subsection{MVSKs for STIX}

To employ MVSKs, we need to define a scaling function $\psi$ and a mapping $S$. 

\subsubsection{The choice of $\psi$.} In order to define the scaling function, we take advantage of a first approximation of the inverse problem obtained via a standard back-projection algorithm \cite{Mersereau} that computes the discretized inverse Fourier transform of the visibilities by means of the IDL source code  {\texttt {vis\char`_bpmap}}  available in the NASA SSW tree. The so-constructed image is then forward Fourier transformed to obtain $\psi$. Once the interpolated visibility surface $\overline{\bf{V}}$ has been computed, the image reconstruction problem reads as follows
\begin{equation}\label{bbb5}
{\overline{\bf{V}}} = {\overline{\bf{F}}} {\overline{\bf{f}}}~,
\end{equation}
where ${\overline{\bf{F}}}$ is the $N^2 \times N^2$ discretized Fourier transform and ${\overline{\bf{f}}}$ is the $N^2 \times 1$ vector to reconstruct. In the following we will point out the advantages of interpolating the visibilities with our technique. 

\subsubsection{The choice of $S$.} To present in details the mapping $S$ chosen for the STIX imaging framework, let us first deepen the definition of the visibilities represented in Figure \ref{fig:0.1} (left). We have
\begin{equation*}
    u_i = (L_1+L_2)\frac{\cos(\alpha_i^f)}{\rho_i^f}-L_2 \frac{\cos(\alpha_i^r)}{\rho_i^r},\; v_i = (L_1+L_2)\frac{\sin(\alpha_i^f)}{\rho_i^f}-L_2 \frac{\sin(\alpha_i^r)}{\rho_i^r},
\end{equation*}
where $L_1=550$, $L_2=47$ and $\alpha_i^f,\rho_i^f,\alpha_i^r,\rho_i^r$ are discussed in \cite{STIX1}. We define the map
\begin{equation*}
    S(u,v)=C\log(\lVert (u,v) \lVert_2)(\cos(\arctan(v/u)),\sin(\arctan(v/u))),
\end{equation*}
where $C$ is a normalizing factor used to retain the order of magnitude of the original visibilities after the mapping. In Figure \ref{fig:0.2}, we can observe that the mapped visibilities are distributed in a circular crown with no clustering around the origin.

\begin{figure}
    \centering
    \includegraphics[scale=0.40]{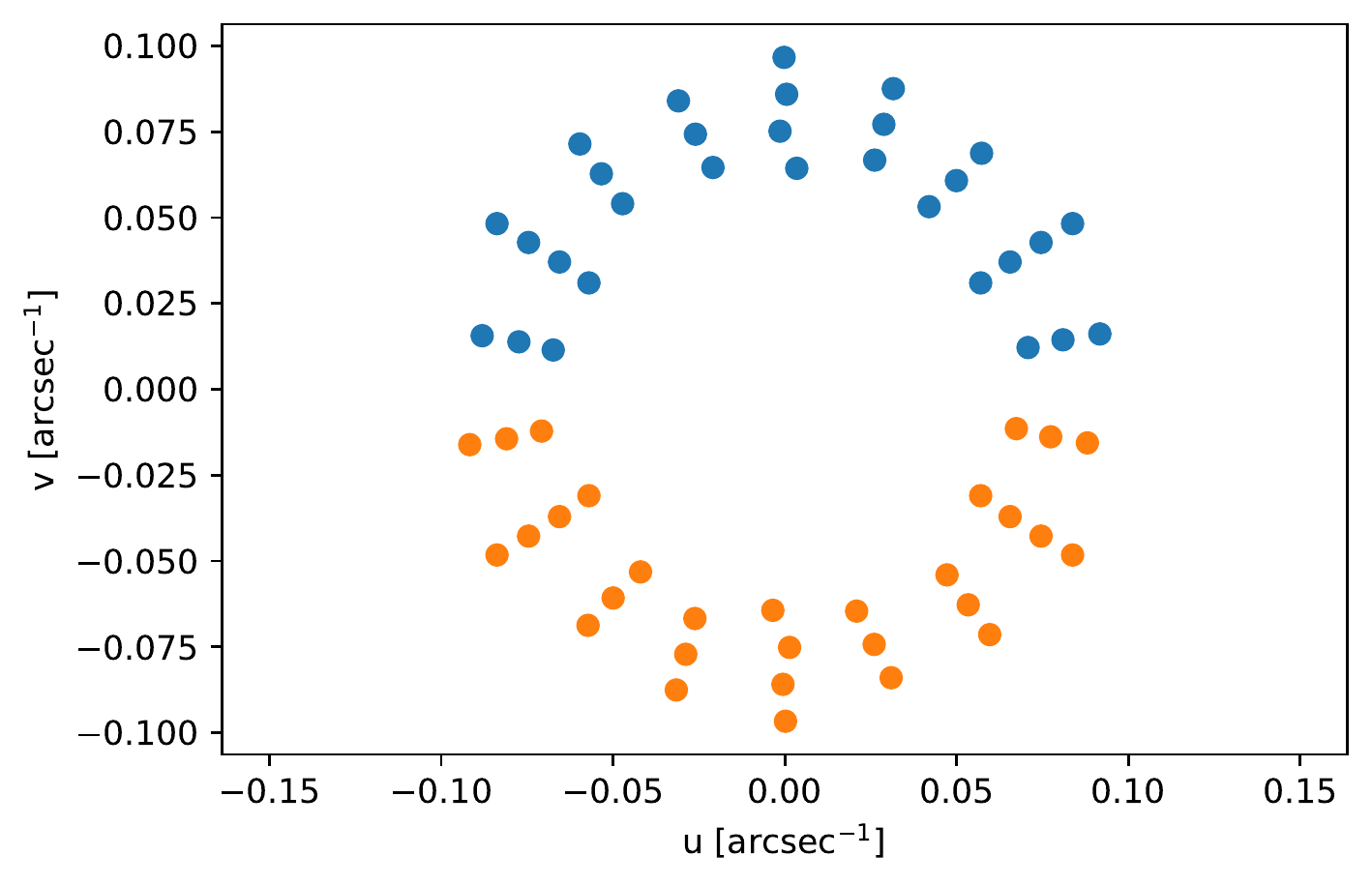}
    \caption{Mapped STIX visibilities.}
    \label{fig:0.2}
\end{figure}

\subsection{Numerical results}

On Jul 2022 STIX recorded a flare during the during the time interval 23:41:15--23:41:41 UT.  The energy range of the event is 15--25 keV. In Figure  \ref{fig:1} we reported the reconstruction carried out with the interpolation/extrapolation algorithm where we respectively interpolate with: classical radial kernels, VSKs and MVSKs. Moreover, as a further comparison we also consider mem$\_$ge \cite{Massa_2020} which is a well established implementation of the maximum entropy approach and it is used by the solar physics community.  We note that all methods present artifacts but the shape of the source reconstructed by interpolating with MVSKs is more similar to the one computed with mem$\_$ge. To have a quantitative feedback on the accuracy, we show in Figure \ref{fig:2} the visibility fits obtained with the four different approaches. We further observe that the chi squares values of mem$\_$ge and uv$\_$smooth + MVSKs are similar and are the lowest. 


\begin{figure}
    \centering
    \includegraphics[scale=0.10]{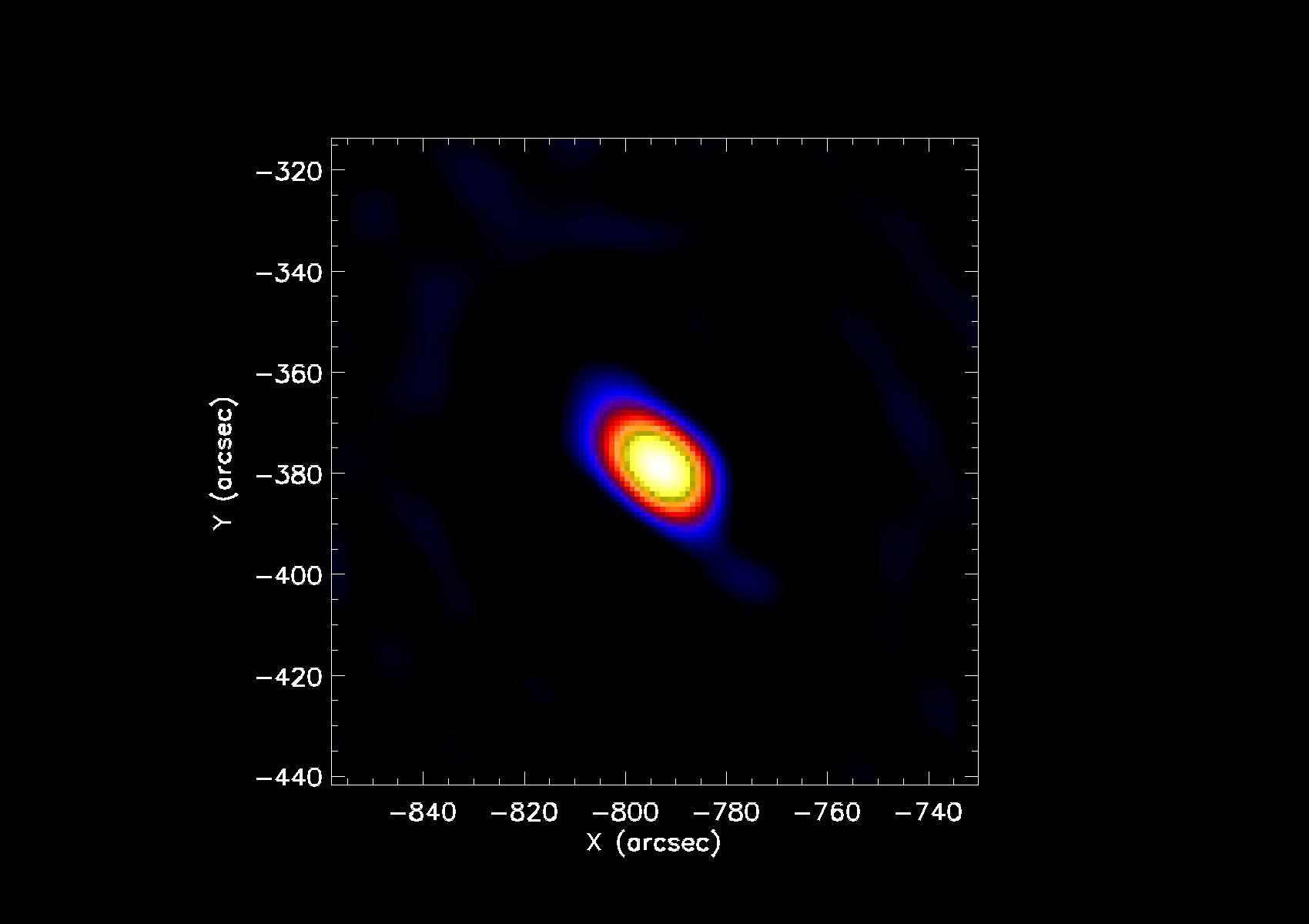}
    \includegraphics[scale=0.10]{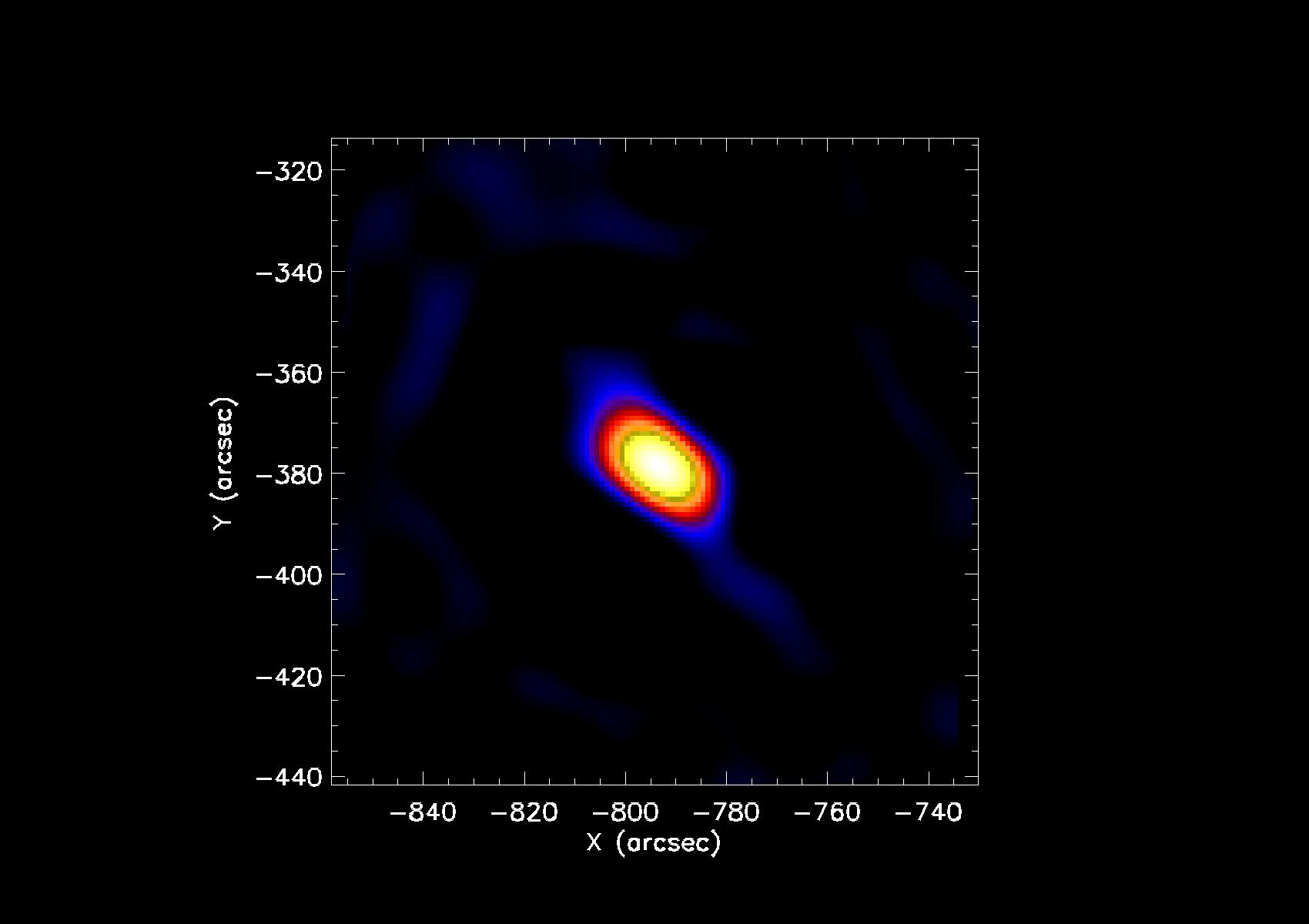} 
     \includegraphics[scale=0.10]{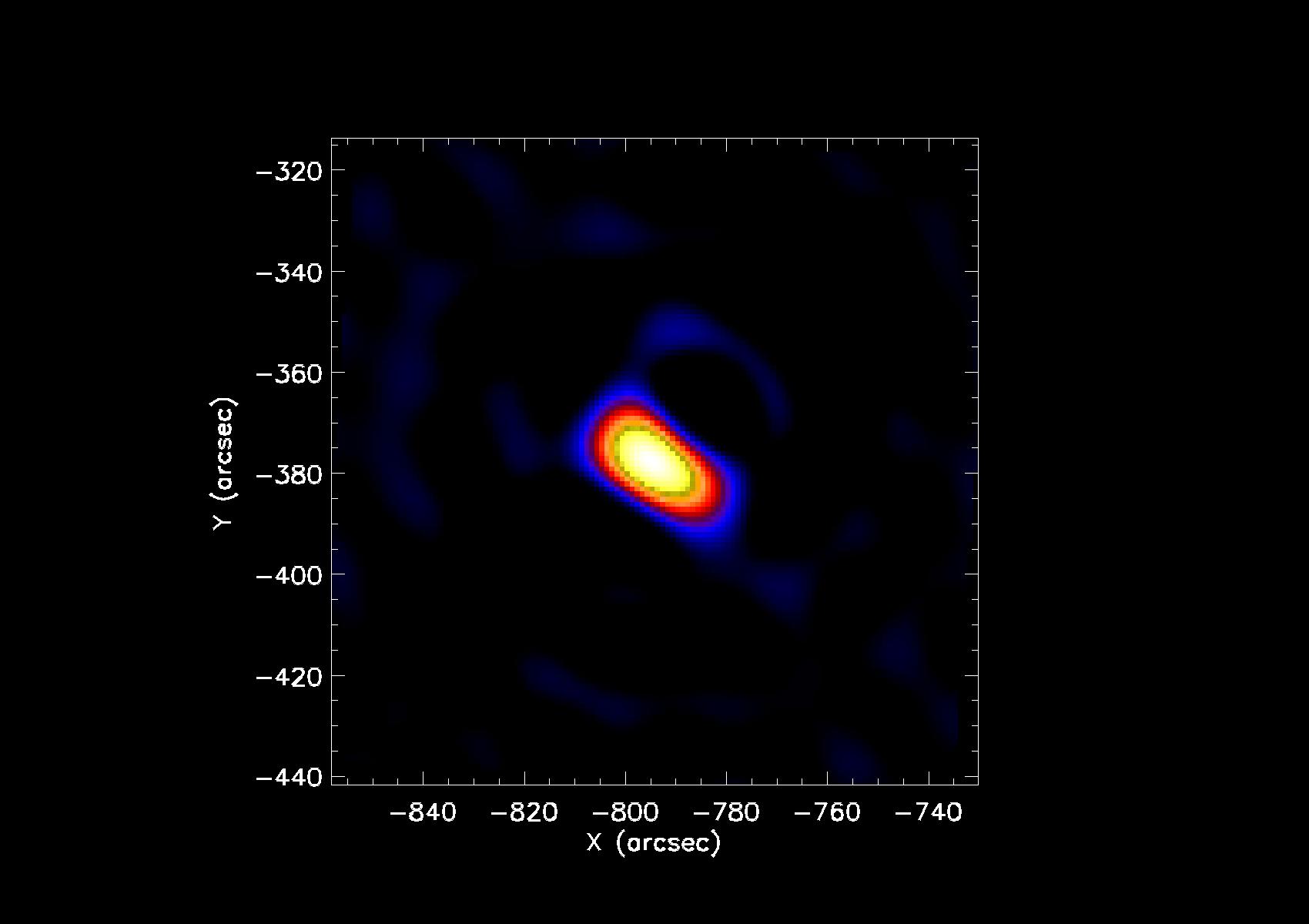}       
    \includegraphics[scale=0.10]{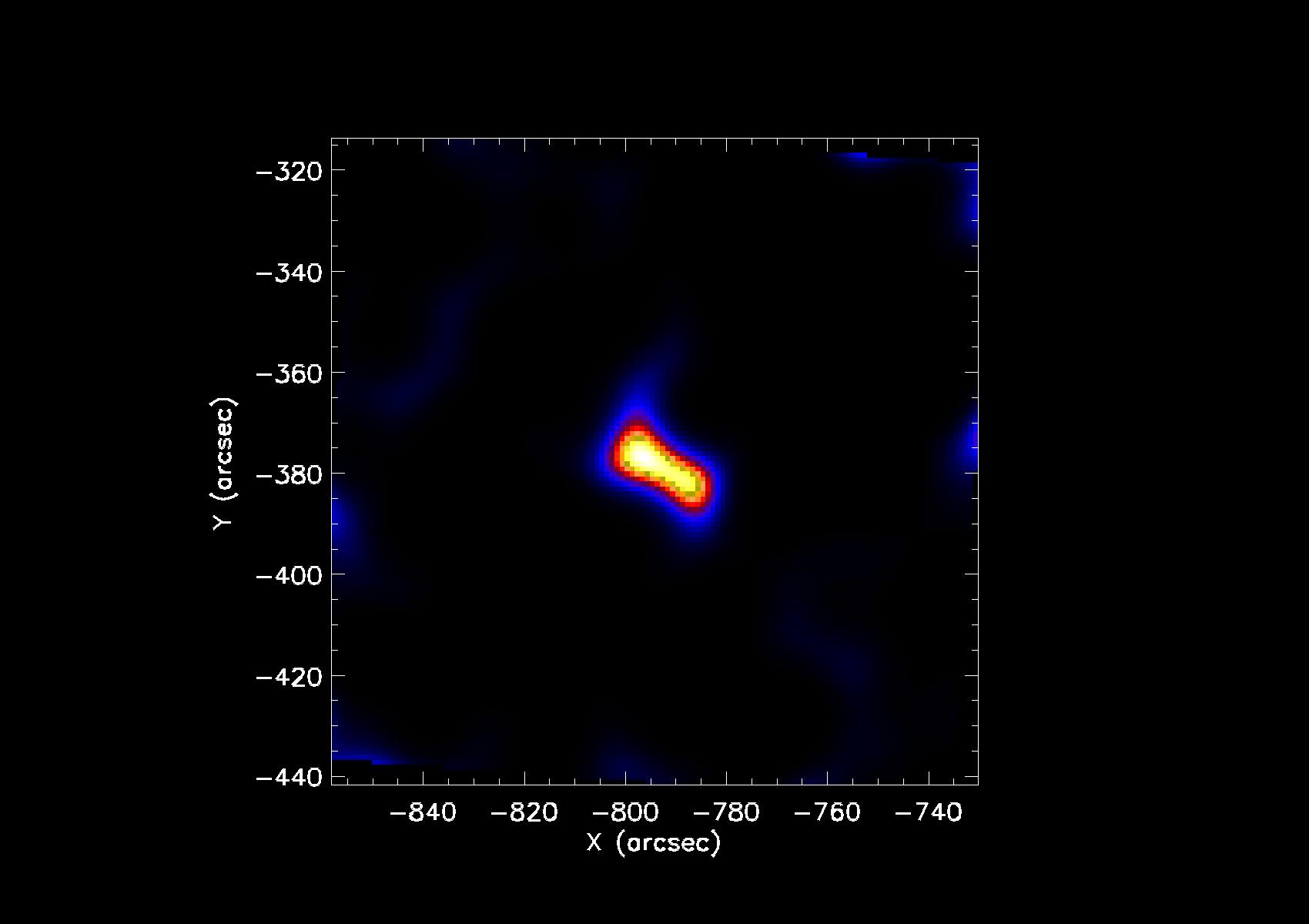}  
    \caption{Left to right, top to bottom: reconstruction of the flaring source with: uv$\_$smooth + classical radial kernels, uv$\_$smooth + VSKs, uv$\_$smooth + MVSKs and mem$\_$ge.}
    \label{fig:1}
\end{figure}

\begin{figure}
    \centering
    \includegraphics[scale=0.22]{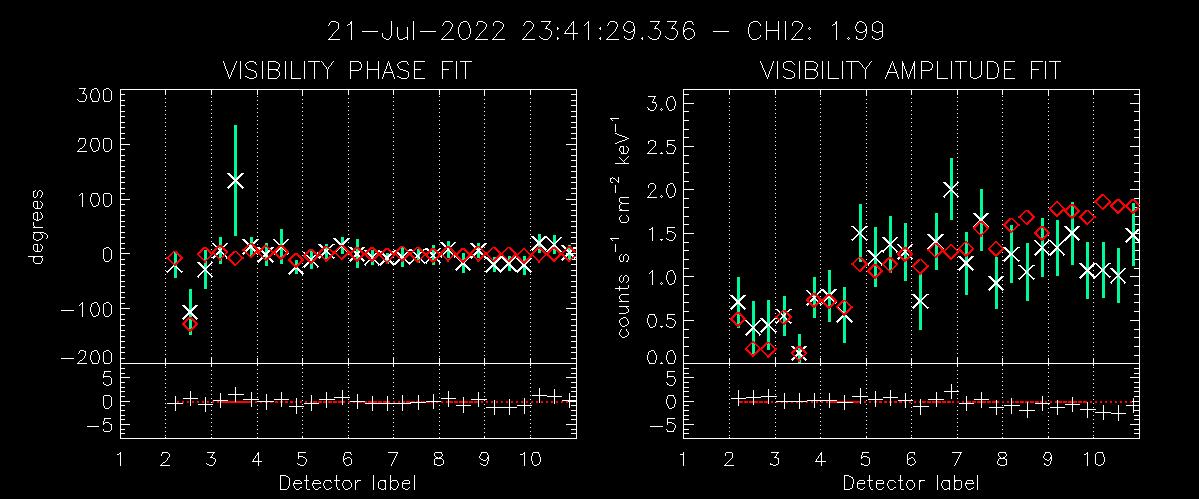}
    \includegraphics[scale=0.22]{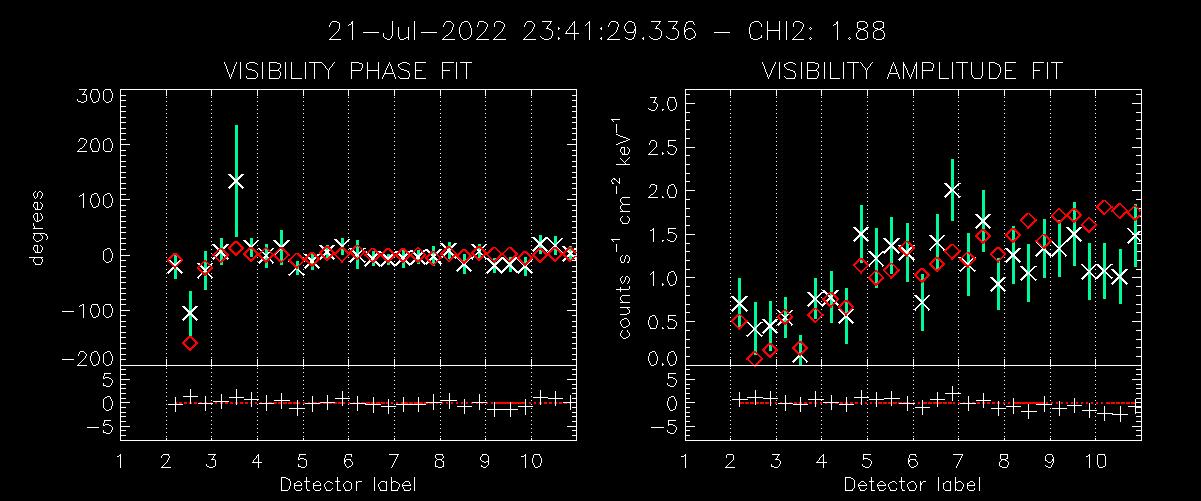} 
     \includegraphics[scale=0.22]{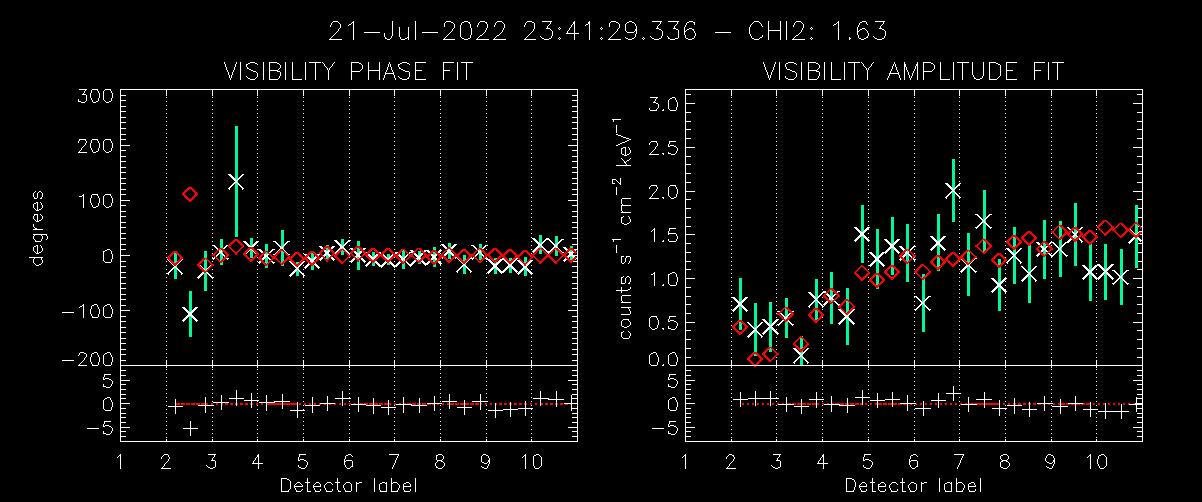}   
        \includegraphics[scale=0.22]{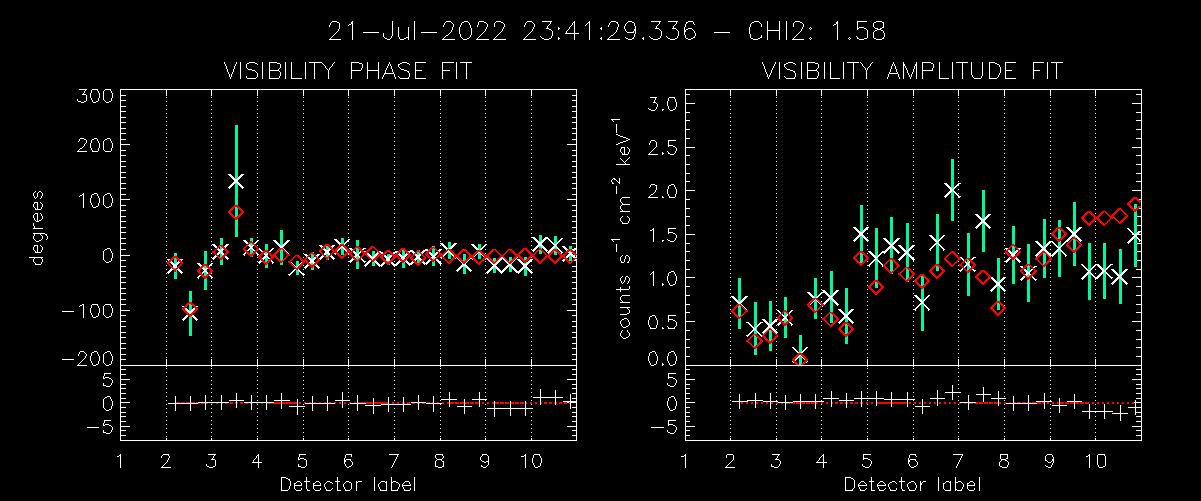}  
    \caption{Top to bottom: visibility fits of the flaring source with: uv$\_$smooth + classical radial kernels, uv$\_$smooth + VSKs, uv$\_$smooth + MVSKs and mem$\_$ge.}
    \label{fig:2}
\end{figure}



\section{Conclusions}\label{sec:conclusions}

In this work, we define MVSKs by mixing VSKs with the FNA. After providing some theoretical details, we performed some numerical simulations that showed the advantages of using MVSDKs with respect to classical RBF kernels and VSDKs. Then, we applied our method in the framework of hard X-ray astronomical imaging. The obtained results encourage further investigations on this research line.

\section*{Acknowledgments}
EP and AM are supported by the project: \lq\lq Physics-based AI for predicting extreme weather and spaceweather events (AIxtreme)\rq\rq, Founded by  Fondazione Compagnia di San Paolo. 
FM acknowledges the financial support of the Programma Operativo Nazionale (PON) "Ricerca e Innovazione" 2014 - 2020. The authors kindly acknowledge the financial contribution from the AI-FLARES ASI-INAF n.2018-16-HH.0 agreement. This research has been accomplished within GNCS-IN$\delta$AM.

%
%
%
 \bibliographystyle{splncs04}
 \bibliography{bibstix}

\end{document}